\newtheorem{thm}{Theorem}
\newtheorem{cor}[thm]{Corollary}
\newtheorem{lem}[thm]{Lemma}
\newtheorem{prop}[thm]{Proposition}
\newtheorem{rem}[thm]{Remark}
\title{\LARGE \bf
%{\it (DRAFT)} 
A proposal of adaptive parameter tuning for robust stabilizing control of $N$--level quantum angular momentum systems
}
\author{Shoju Enami and Kentaro Ohki% <-this % stops a space
\thanks{This work was supported by JSPS KAKENHI Grant Number JP19K03619 and JP20H02168.}% <-this % stops a space
\thanks{S. Enami and K. Ohki are with Department of Applied Mathematics and Physics, Graduate School of Informatics, 
        Kyoto University, 606-8501 Yoshida-Honmachi, Sakyo-ku, Kyoto, Japan.
        {\tt\small ohki@bode.amp.i.kyoto-u.ac.jp}}%
}
\begin{document}

\maketitle
\thispagestyle{empty}
\pagestyle{empty}

%%%%%%%%%%%%%%%%%%%%%%%%%%%%%%%%%%%%%%%%%%%%%%%%%%%%%%%%%%%%%%%%%%%%%%%%%%%%%%%%
\begin{abstract}
Stabilizing control synthesis is one of the central subjects in control theory and engineering, and it always has to deal with unavoidable uncertainties in practice. In this study, we propose an adaptive parameter tuning algorithm for robust stabilizing quantum feedback control of $N$-level quantum angular momentum systems with a robust stabilizing controller proposed by 
[Liang, Amini, and Mason, {\em SIAM J. Control Optim.}, 59 (2021), pp. 669-692].  
The proposed method ensures local convergence to the target state. 
Besides, numerical experiments indicate its global convergence if the learning parameters are adequately determined.  

\end{abstract}

%%%%%%%%%%%%%%%%%%%%%%%%%%%%%%%%%%%%%%%%%%%%%%%%%%%%%%%%%%%%%%%%%%%%%%%%%%%%%%%%
\section{Introduction}

Stabilizing controller synthesis is one of the central problems in control systems, even if systems are described by quantum mechanics \cite{Ramon_IEEE2005,Mirrahimi_stabilizing_FB_control,tsumura2007global,sarlette2012stabilization,ticozzi2012hamiltonian,ticozzi2013stabilization,scaramuzza2015switching,ticozzi2017alternating,liang2019exponential,cardona2020exponential,wen2021exponential}.  
Unfortunately, stabilizing control is vulnerable to failure due to the existence of uncertainties in practice.   
There are two conventional approaches 
to overcome this problem; 
robust control \cite{zhou1996rao,petersen2000robust} and adaptive control \cite{narendra1989stable,krstic1995nonlinear}.  
Robust control ensures performance of the control system under worst-case scenario against a given set of uncertainties.  
It has been actively studied for quantum systems\cite{james2004rso,james2005quantum,dong2015robust,vladimirov2018risk,James_H_infty,LiangAminiMason2021}, 
as well as classical systems.   
The most common problem of robust control is that it is difficult to specify the uncertainties in advance, and even if possible, the robust controller tends to yield conservative control performance.  
On the other hand, adaptive control operates on the system by learning model parameters.  
Adaptive approaches for quantum system identification and filtering have also been studied \cite{bonnabel2009observer,LeghtasMirrahimiRouchon2011,gupta2021adaptive}.  
However, these studies do not consider a stochastic continuous measurement signal, which is known as a homodyne measurement signal in physics and one of the commonly used detection models for quantum physics, and no real-time adaptive control framework has been proposed in the previous studies so far.

Recently, Liang {\it et. al.} \cite{LiangAminiMason2021} derived certain conditions for robust stabilization of $N$--level quantum angular momentum systems with uncertain parameters and initial state.  
They revealed that the accurate estimation of the multiplication of the two parameters is essential for their robust stabilization.  
This fact is important because it ensures robust stabilization by accurate estimation of the multiplication of the parameters only rather than the parameters individually. 
Motivated by \cite{LiangAminiMason2021}, we propose an adaptive parameter tuning algorithm with stabilizing control.  
To the best of our knowledge, this is the first study on adaptive control for quantum systems with continuous-time measurement feedback.  
The proposed adaptive law is aimed to minimize the difference between the original and model outputs. 
The method is simple, but it works well in numerical experiments under certain assumptions, and ensure local convergence.

\subsection{Contributions}

The contributions of this study are summarized as follows:
\begin{itemize}
\item An adaptive parameter tuning algorithm for robust quantum stabilizing control is proposed (Equation \eqref{eq:adaptive_tuning}). 

\item An asymptotic property of the estimate under steady-state conditions is derived (Proposition \ref{prop:statistical_property_tuning}).  

\item Local convergence of the proposed method is evaluated under certain assumptions (Theorem \ref{thm:main_result}).  

\end{itemize}

\subsection{Organization}

The rest of this paper is organized as follows.  
The problem is stated in Section \ref{sec:problem_setting}.  
In Sec. \ref{sec:main_results}, we propose an adaptive parameter tuning algorithm and the analytical results are shown.   
The proposed method is evaluated numerically and compared with the application of \cite{LiangAminiMason2021} in Sec. \ref{sec:numerical_experiments}.  
We conclude the paper in Sec. \ref{sec:conclusion}.

\subsection{Notation} 

	$\mathbb{N}$, $\mathbb{R}$ and $\mathbb{C}$ are natural, real, and complex numbers, respectively,   
	and $\mathrm{i}:= \sqrt{-1}$.  
	$\mathbb{R}^{n\times m}$ and $\mathbb{C}^{n\times m}$ are real and complex $n \times m$ matrices, respectively.  
	$X^{\ast}$ implies the Hermitian conjugate of matrix $X$.  
	We use $I_{n} \in \mathbb{C}^{n\times n}$ as the identify matrix.  
	For $X = X^{\ast} \in \mathbb{C}^{n \times n}$, $X>0$ ($X\geq 0$) indicates that $X$ is a positive-(semi)definite matrix.  
	When two positive-semidefinite matrices $X$ and $Y$ satisfy $X = Y^{2}$, we denote $Y = \sqrt{X}$.  
	The absolute value of a square matrix is defined as $|X| := \sqrt{X^{\ast} X}$ and for $X \in \mathbb{C}^{n\times n}$, the trace norm is defined as $\| X \| _{\rm Tr} := \mathrm{Tr}[ |X|]$.  
	$\mathcal{S}(\mathbb{C}^{n}) := \{ \rho \in \mathbb{C}^{n\times n} \ | \ \rho = \rho^{\ast} \geq 0 , \ \mathrm{Tr}[\rho ]=1\}
$. 
	Denote $[X , Y ]_{-} := XY - YX$ $\forall X,Y \in \mathbb{C}^{n\times n} $.  
	$\mathbb{E}_{w}$ indicates the expectation in terms of a random variable or a stochastic process $w$.  
	$O(\varepsilon)$ is Landau's $O$ as $\varepsilon \to 0$.

%%%%%%%%%%%%%%%%%%%%%%%%%%%%%%%%%%%%%%%%%%%%%%%%%%%%%%%%%%%%%%%%%%%%%%%%%%%%%%%%
\section{Problem Formulation}
\label{sec:problem_setting}

\subsection{Measurement-based Feedback Quantum Systems}

Let $J \in \mathbb{N}$ and $N:=2J+1$, and let us consider the following quantum stochastic differential equation \cite{Mirrahimi_stabilizing_FB_control,liang2019exponential,LiangAminiMason2021}.  
\begin{align}
d\rho(t) =& \mathrm{i} [H_{\omega}(u(t)) , \rho (t) ]_{-} dt - \frac{M}{2} [J_{z} , [J_{z} , \rho (t) ]_{-} ]_{-} dt
\nonumber \\
& + \sqrt{\eta M} \left( J_{z} \rho (t) + \rho(t) J_{z} - 2\mathrm{Tr}[ J_{z} \rho (t)  ] \rho (t) \right)
\nonumber \\ & \quad \quad \times \left( dy(t) - 2\sqrt{\eta M} \mathrm{Tr}[ J_{z} \rho (t)  ]dt \right)
\label{eq:sme}
\end{align}
with an initial state $\rho (0) \in \mathcal{S}(\mathbb{C}^{N})$, 
where $\rho (t) \in \mathcal{S}(\mathbb{C}^{N})$ is a conditional state of the system, 
$u(t)$ is the  control input, $y(t)$ is the measurement output, 
$H_{\omega}(u):= \omega J_{z} + u J_{y}$, 
\begin{align*}
\\
J_{z} :=&
\mathrm{diag}(J, J-1,\ \dots , \ -J+1 , -J),
\\
J_{y}:=&
\begin{bmatrix}
0 & -\mathrm{i}  c_{1} & 0 & \cdots & 0
\\
\mathrm{i}  c_{1} & \ddots & \ddots  & & \vdots
\\
0& \ddots & \ddots  & \ddots & 0
\\
\vdots & & \ddots & \ddots & -\mathrm{i}c_{N-1}
\\
0& \cdots & 0& \mathrm{i}c_{N-1} & 0
\end{bmatrix}
,
\end{align*}
$c_{m} =\frac{1}{2}\sqrt{(2J+1 -m)m}$, $m=1,\dots ,N-1$, 
and $\omega >0$, $M >0$ is the coupling constant, and $\eta \in (0,1]$ denotes measurement efficiency \cite{Bouten_quant_filtering}.  
$u(t)$ is control input and throughout this paper, $u$ is assumed bounded.  
$\rho (t)$ is called the {\it state}, which is a quantum counterpart of (conditional) probability law.  
Equation \eqref{eq:sme} is called {\it stochastic master equation} having $N$ different equilibrium points if the control input $u(t)=0$.  
We denote each equilibrium point as $\rho _{n} \in \mathcal{S}(\mathbb{C}^{N})$, $n=0,\dots ,2J$, and the target state is described by $\rho _{\bar{n}}$.  
Note that $\rho _{n}$ consists of an eigenvector of $J_{z}$, i.e., 
\begin{align*}
J_{z} \rho _{n} = \rho _{n}J_{z} = (J-n) \rho _{n} \quad \forall n\in \{ 0,\dots ,2J\}
.
\end{align*}

A stabilization problem of \eqref{eq:sme} is to ensure that the state converges to a desirable equilibrium point.   
Therefore, model uncertainty must be considered in practical situations as 
different uncertainties exist  
in the model, initial state, and parameters of \eqref{eq:sme}.  
In this paper, we consider parametric uncertainty and unknown initial condition.  
Then, the nominal model is described as follows.  
\begin{align}
d\hat{\rho}(t) =& \mathrm{i} [H_{\hat{\omega}}(u(t)) , \hat{\rho} (t) ]_{-} dt - \frac{\hat{M}}{2} [J_{z} , [J_{z} , \hat{\rho} (t) ]_{-} ]_{-} dt
\nonumber \\
& + \sqrt{\hat{\eta} \hat{M}} \left( J_{z} \hat{\rho} (t) + \hat{\rho}(t) J_{z} - 2\mathrm{Tr}[ J_{z} \hat{\rho} (t)  ] \hat{\rho} (t) \right)
\nonumber \\ & \quad \quad \times \left( dy(t) - 2\sqrt{\hat{\eta} \hat{M}} \mathrm{Tr}[ J_{z} \hat{\rho} (t)  ]dt \right)
\label{eq:nominal_sde}
\end{align}
with its initial state $\hat{\rho} (0) \in \mathcal{S}(\mathbb{C}^{N})$.  
The differences from \eqref{eq:sme} are the initial state $\hat{\rho}(0)$ and the parameters $(\hat{\omega}, \hat{M}, \hat{\eta})$.  
Because the accessible state is $\hat{\rho}(t)$,  
the goal of the stabilization problem is to find a feedback controller $u(t) = u_{FB}(\hat{\rho}(t) )$ that ensures 
$\lim _{t \to \infty}\rho(t) = \rho _{\bar{n}}$ as one of the stochastic convergences.

\subsection{Previous Work}

	Liang {\it et. al.} \cite{LiangAminiMason2021} found certain sufficient conditions when the nominal state stabilization becomes the true state stabilization.  
	 One of their main results is that if the ratio of $\hat{\eta}\hat{M}$ and $\eta M$ is close to $1$, then there exists a stabilizing controller.    
	For convenience, we write $\hat{\theta}:= \sqrt{\hat{\eta} \hat{M}}$ and $\theta := \sqrt{\eta M}$, and then the following result holds \cite{LiangAminiMason2021}.  
	\begin{thm}[{\cite[Propositions 4.16 and 4.18]{LiangAminiMason2021}}]
	\label{thm:LiangAminiMason2021}

	Suppose $\hat{\theta}$ satisfies 
	\begin{align}
	\alpha _{\bar{n}} <  \frac{\hat{\theta}}{\theta} - 1 < \beta _{\bar{n}}, 
	\label{eq:convergence_condition}
	\end{align}
	where
	\begin{align*}
	\alpha_{\bar{n}} := &
	\left\{
	\begin{array}{ll}
	\displaystyle 
	-\frac{1}{2N-1}, & \bar{n} \in \{ 0, 2J \} , 
	\\
	\displaystyle
	-\frac{1}{N-2}, & \bar{n} =J, 
	\\ 
	\displaystyle
	-\frac{1}{L_{\bar{n}} +1}, & \mbox{otherwise},
	\end{array}
	\right.
	\\
	\beta _{\bar{n}} :=& 
	\left\{
	\begin{array}{ll}
	\displaystyle
	\frac{1}{2} \left(
	\sqrt{\frac{N+1}{N-1}} -1
	\right) , & \bar{n} \in \{ 0, 2J \} ,
	\\
	\displaystyle
	\frac{1}{N-2}, & \bar{n} =J,
	\\
	\displaystyle
	\frac{1}{L_{\bar{n}} -1}, & \mbox{otherwise}, 
	\end{array}
	\right.
	\end{align*}
	and $L_{\bar{n}} := 4|J-\bar{n}| \max \{ \bar{n}, 2J-\bar{n} \}$, $\hat{\rho}(0)$ is positive-definite, and $\rho (0) \in \mathcal{S}(\mathbb{C}^{N})$.  
	Then, there exists an asymptotically stabilizing control law that ensures 
	\begin{align*}
	(\rho (t) , \hat{\rho}(t)) \xrightarrow{t \to \infty} (\rho _{\bar{n}}, \rho _{\bar{n}} )
	\quad \mbox{a.s.}  
	\end{align*}

\end{thm}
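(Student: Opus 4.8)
The plan is to study the joint conditional state $(\rho(t),\hat\rho(t))$ as a single Markov diffusion and to show that $(\rho_{\bar n},\rho_{\bar n})$ is an almost surely stable attractor under a suitable feedback. First I would eliminate the common measurement record by substituting $dy(t)=2\theta\,\mathrm{Tr}[J_z\rho(t)]\,dt+dW(t)$, where $W$ is the physical innovation Wiener process of the true system, into both \eqref{eq:sme} and \eqref{eq:nominal_sde}. This turns the pair into a coupled stochastic system driven by the single noise $W$; the mismatch enters through the nominal ``innovation'' $dy-2\hat\theta\,\mathrm{Tr}[J_z\hat\rho]\,dt=dW+2(\theta\,\mathrm{Tr}[J_z\rho]-\hat\theta\,\mathrm{Tr}[J_z\hat\rho])\,dt$, which is not a martingale under the true law. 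One checks directly that $(\rho_{\bar n},\rho_{\bar n})$ is an equilibrium: since $J_z\rho_{\bar n}=(J-\bar n)\rho_{\bar n}$ both back-action terms vanish there, and the feedback is chosen to vanish at the target so that the Hamiltonian commutators vanish as well.

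Second, I would fix the feedback $u(t)=u_{FB}(\hat\rho(t))$ to be the continuous law used for exponential stabilization of the nominal filter as in \cite{liang2019exponential,LiangAminiMason2021}, i.e. a smooth function of $\hat\rho$ that vanishes at $\rho_{\bar n}$ and whose $J_y$-rotation steers nominal population toward the eigenspace index $\bar n$ when away from the target.

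Third, and this is the crux, I would establish local attractivity of the joint target by a stochastic Lyapunov argument. Let $\mathcal{L}$ denote the infinitesimal generator of the joint diffusion, and take a Lyapunov function $V(\rho,\hat\rho)$ that vanishes only at $(\rho_{\bar n},\rho_{\bar n})$ and is built from the non-target populations of both states. Expanding $\mathcal{L}V$ by It\^o's formula near the target, the first-order drift terms (Hamiltonian plus measurement correction) are controlled, while the quadratic-variation term, because $\rho$ and $\hat\rho$ share the single noise $W$, produces contributions scaling as $\theta^2$, $\hat\theta^2$, and a cross term proportional to $\theta\hat\theta$. To leading order in the small non-target populations, the sign of $\mathcal{L}V$ is then governed by a quadratic form in the ratio $r:=\hat\theta/\theta$, and I would show that this form is negative in a punctured neighborhood of the target precisely when $\alpha_{\bar n}<r-1<\beta_{\bar n}$. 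The three cases in $\alpha_{\bar n},\beta_{\bar n}$ — the boundary targets $\bar n\in\{0,2J\}$, the central target $\bar n=J$, and the generic case — arise from which adjacent pair of populations supplies the dominant worst-case mode, through the gaps $J-n$ and the edge and degeneracy structure of $J_z$. I expect this step to be the main obstacle, since it requires tracking the exact second-order coupling and extracting the sharp thresholds from the least favorable local mode.

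Finally, I would upgrade local stability to the stated almost sure convergence. With $\mathcal{L}V\le 0$ near the target and strictly negative off it, $V$ is a local nonnegative supermartingale, so by the supermartingale convergence theorem together with a Kushner-type invariance principle the joint state converges a.s. to the largest invariant subset of $\{\mathcal{L}V=0\}$, namely $(\rho_{\bar n},\rho_{\bar n})$. To reach the neighborhood from the given initial data I would use a support and irreducibility argument for the diffusion — here positive-definiteness of $\hat\rho(0)$ keeps the nominal filter full rank so that $u_{FB}$ stays well defined and the reachability estimate holds — showing that the process enters any neighborhood of the target in finite time with positive probability, hence infinitely often, at which point local attractivity traps it and yields $(\rho(t),\hat\rho(t))\to(\rho_{\bar n},\rho_{\bar n})$ a.s.
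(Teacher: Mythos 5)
First, a point of orientation: the paper you are being compared against never proves Theorem~\ref{thm:LiangAminiMason2021} at all --- it is imported verbatim from Propositions 4.16 and 4.18 of \cite{LiangAminiMason2021}, and even the proof of Corollary~\ref{cor:LiangAminiMason2021} is dismissed with ``the proof is the same \dots so we omit it here.'' So the only meaningful comparison is with the proof in that cited reference. At the level of architecture your plan does follow the same route as the original: view $(\rho,\hat\rho)$ as a single diffusion driven by the common measurement record, take the explicit continuous feedback of \cite{LiangAminiMason2021} (their (4.22)/(4.23)), perform a local stochastic Lyapunov analysis near $(\rho_{\bar n},\rho_{\bar n})$ in which the ratio $\hat\theta/\theta$ enters the drift, and globalize by combining reachability of a neighborhood of the target with local attractivity.

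As a proof, however, there is a genuine gap, and it sits exactly where the theorem lives. The entire content of the statement is the quantitative window \eqref{eq:convergence_condition}: the constants $\alpha_{\bar n}$, $\beta_{\bar n}$ with their three-case structure, including $L_{\bar n}=4|J-\bar n|\max\{\bar n,\,2J-\bar n\}$. You describe the step that would produce these constants --- the sign analysis of a quadratic form in $r=\hat\theta/\theta$ coming from the second-order terms of $\mathcal{L}V$ --- and then explicitly defer it (``I expect this step to be the main obstacle''). Without carrying out that computation, with a Lyapunov function tailored to each of the three cases rather than a generic ``non-target population'' function, nothing pins down the thresholds; the sketch at best shows that \emph{some} admissible window around $\hat\theta/\theta=1$ exists, which is strictly weaker than the stated result. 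Moreover, claiming the form is negative ``precisely when'' $\alpha_{\bar n}<r-1<\beta_{\bar n}$ over-commits to necessity, which is neither needed nor asserted by the theorem. Separately, your globalization step is looser than it can afford to be: ``enters any neighborhood of the target in finite time with positive probability, hence infinitely often'' requires an entry-probability lower bound that is \emph{uniform} over initial conditions (via Feller continuity, support arguments, and compactness of $\mathcal{S}(\mathbb{C}^N)\times\mathcal{S}(\mathbb{C}^N)$) combined with the strong Markov property; and local stochastic attractivity does not ``trap'' a trajectory on a given visit --- each visit yields convergence with probability at least $1-\delta$, and one concludes by compounding these bounds over successive visits. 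These repairs are standard, but as written the final step would not survive refereeing.
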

	
	Note that Theorem \ref{thm:LiangAminiMason2021} is only part of their results.  
	See \cite{LiangAminiMason2021} for the details.

\subsection{Problem Statement}

Before stating our problem, we present a minor modification of Theorem \ref{thm:LiangAminiMason2021}.  

\begin{cor}
\label{cor:LiangAminiMason2021}
	
	Let $\hat{\theta}(t):= \sqrt{\hat{\eta}(t) \hat{M}(t)}$ be a time varying parameter and we assume that there exists $t_{0} >0$ that satisfies the following constraint; 
	\begin{align}
	\alpha _{\bar{n}} <  
	\frac{\hat{\theta}(t)}{\theta} -1 
	< \beta _{\bar{n}} \quad \forall t \geq t_{0},
	\label{eq:convergence_condition_tv}
	\end{align}
	where $\alpha _{\bar{n}}$ and $\beta _{\bar{n}}$ are the same as defined in Theorem \ref{thm:LiangAminiMason2021}, 
	$\hat{\rho}(t_{0}) >0$, 
	and $\rho (0) \in \mathcal{S}(\mathbb{C}^{N})$.  
	Then, there exists a stabilizing control law.

\end{cor}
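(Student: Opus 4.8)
The plan is to reduce Corollary \ref{cor:LiangAminiMason2021} to Theorem \ref{thm:LiangAminiMason2021} by a time-shift argument, treating $t_0$ as a new initial time. The hypothesis \eqref{eq:convergence_condition_tv} only constrains the ratio $\hat{\theta}(t)/\theta$ for $t \geq t_0$, and the initial condition of Theorem \ref{thm:LiangAminiMason2021} is supplied at $t_0$ rather than at $0$: the coupled pair $(\rho(t_0), \hat{\rho}(t_0))$ lies in $\mathcal{S}(\mathbb{C}^{N}) \times \mathcal{S}(\mathbb{C}^{N})$ with $\hat{\rho}(t_0) > 0$ by assumption, so the positive-definiteness requirement on the nominal state is met exactly at the restart time. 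First I would therefore discard the transient behaviour on $[0, t_0)$ and analyse the flow of \eqref{eq:nominal_sde} and \eqref{eq:sme} on $[t_0, \infty)$ only.

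The essential step is that Theorem \ref{thm:LiangAminiMason2021} is established in \cite{LiangAminiMason2021} through a stochastic Lyapunov (strict supermartingale) function $V(\rho, \hat{\rho})$ on the coupled state space, whose infinitesimal decrease is controlled by the instantaneous value of the ratio $\hat{\theta}/\theta$. I would inspect that construction and verify that the sign of the generator $\mathcal{L} V$ --- hence the supermartingale inequality that drives the almost-sure convergence --- depends on $\hat{\theta}$ only through the membership requirement $\alpha_{\bar{n}} < \hat{\theta}/\theta - 1 < \beta_{\bar{n}}$, and not on $\hat{\theta}$ being constant in time. Since condition \eqref{eq:convergence_condition_tv} places $\hat{\theta}(t)/\theta$ inside this admissible interval for every $t \geq t_0$, the same $V$ should remain a strict supermartingale for the time-inhomogeneous dynamics obtained by substituting $\hat{\theta}(t)$ for $\hat{\theta}$.

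With the supermartingale property secured pointwise in time, I would then reapply the stochastic LaSalle / supermartingale convergence argument of \cite{LiangAminiMason2021} on $[t_0, \infty)$: boundedness of $V$ from below together with the strict decrease away from $(\rho_{\bar{n}}, \rho_{\bar{n}})$ forces the coupled state to approach the target set almost surely, while the transience estimates near the remaining equilibria $\rho_{n}$, $n \neq \bar{n}$, rule out convergence to any spurious fixed point. This yields the existence of a stabilizing control law, which is precisely the conclusion of the corollary.

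The main obstacle I anticipate is \emph{uniformity} over the admissible interval. The estimates in \cite{LiangAminiMason2021} are derived for a single fixed $\hat{\theta}$, so I must check that the resulting supermartingale bounds --- and the constants quantifying the strict decrease of $V$ and the transience near the spurious equilibria --- can be taken independent of the particular value of $\hat{\theta}(t)$ as it ranges over $(\theta(1+\alpha_{\bar{n}}), \theta(1+\beta_{\bar{n}}))$. Establishing such a uniform-in-$\hat{\theta}$ version of the original estimates, so that the strict decrease survives the time variation, is the step that carries the real content; the time-shift and Markov-restart bookkeeping around it is routine.
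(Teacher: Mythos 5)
Your proposal is correct and takes essentially the same route as the paper: the paper's own proof of Corollary \ref{cor:LiangAminiMason2021} consists entirely of the remark that the arguments of \cite[Propositions 4.16 and 4.18]{LiangAminiMason2021} carry over unchanged, which is precisely your time-shift-to-$t_{0}$ reduction combined with the observation that the stochastic Lyapunov/LaSalle machinery depends on $\hat{\theta}(t)$ only through the pointwise membership condition \eqref{eq:convergence_condition_tv}. The uniformity-over-the-interval check you flag is the right technical point to verify, but it is exactly the content the paper delegates to the cited propositions rather than a different approach.
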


\begin{proof}
	The proof is the same as \cite[Propositions 4.16 and 4.18]{LiangAminiMason2021}, so we omit it here.  
\end{proof}
 
 Corollary \ref{cor:LiangAminiMason2021} implies that if we can set the parameter $\hat{\theta}(t)$ appropriately, 
 the stabilization is achieved even if the initial parameter $\hat{\theta}(0)$ does not satisfy the condition \eqref{eq:convergence_condition}.  
 Therefore, the problem we deal with is how to estimate $\theta$ while stabilizing the state $\rho(t)$.  
Adaptive parameter tuning $\hat{\theta}(t)$ with stabilizing control is a simple and useful solution for the problem, as shown in the next section.

%%%%%%%%%%%%%%%%%%%%%%%%%%%%%%%%%%%%%%%%%%%%%%%%%%%%%%%%%%%%%%%%%%%%%%%%%%%%%%%%
\section{Proposed Method and Theoretical Results}
\label{sec:main_results}

Owing to the work of Liang \textit{et al.}\cite{LiangAminiMason2021}, there is an acceptable uncertainty of the parameter $\theta$ that ensures the convergence of the state to the target state.  
Hence, we only focus on the parameter tuning of $\hat{M}(t)$.   
The adaptive model is then described as follows (Fig. \ref{fig_adaptive controller}).  
\begin{align}
d\hat{\rho}(t) =& \mathrm{i} [H_{\hat{\omega}}(u(t)) , \hat{\rho} (t) ]_{-} dt - \frac{\hat{M}(t)}{2} [J_{z} , [J_{z} , \hat{\rho} (t) ]_{-} ]_{-} dt
\nonumber \\
& + \sqrt{\hat{\eta} \hat{M}(t)} \left( J_{z} \hat{\rho} (t) + \hat{\rho}(t) J_{z} - 2\mathrm{Tr}[ J_{z} \hat{\rho} (t)  ] \hat{\rho} (t) \right)
\nonumber \\ & \quad \quad \times \left( dy(t) - 2\sqrt{\hat{\eta} \hat{M}(t)} \mathrm{Tr}[ J_{z} \hat{\rho} (t)  ]dt \right)
,
\label{eq:adaptive_sde}
\end{align}
where $(\hat{\omega}, \hat{M}(0), \hat{\eta})$ are given and $\hat{M}(t)$ is calculated by our proposed parameter tuning algorithm below.  

	\begin{figure}[thpb]
       \centering
       \includegraphics[scale=0.45]{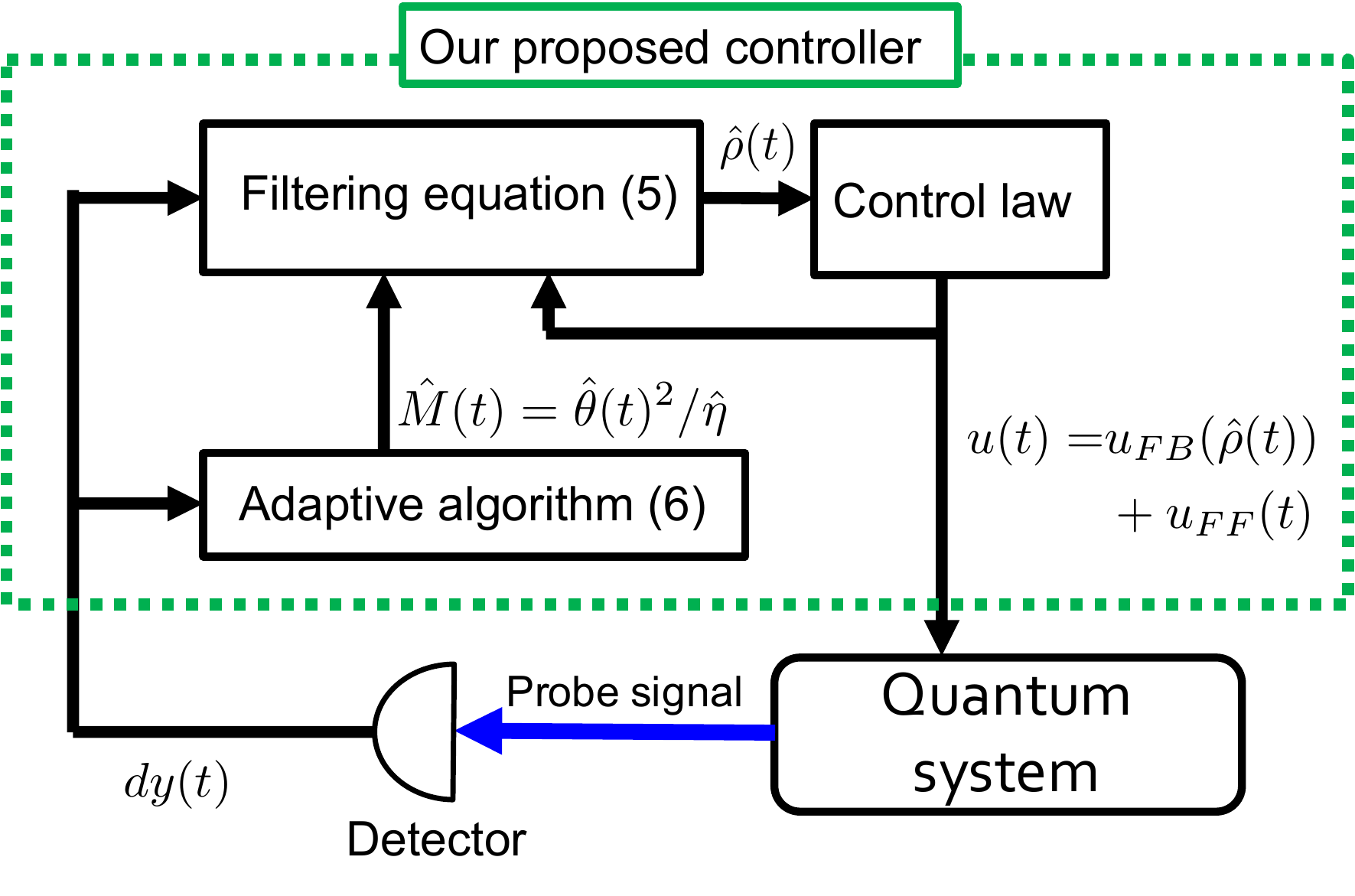}
       \caption{
       The proposed adaptive controller
       }
       \label{fig_adaptive controller}
       \end{figure}

\subsection{Proposed Adaptive Parameter Tuning Method}

For convenience, we use $\hat{\theta}(t) := \sqrt{\hat{\eta} \hat{M}(t)}$, $\hat{x}(t):= \mathrm{Tr}[J_{z} \hat{\rho}(t)]$, and $x(t) :=  \mathrm{Tr}[J_{z} \rho(t)]$.  
Then, we propose the following parameter tuning algorithm.  
\begin{align}
d\hat{\theta} (t) =& f(t) \left\{ - \hat{x}(t) ^{2} \hat{\theta}(t) dt + \frac{1}{2} \hat{x}(t) dy(t) \right\} ,
\label{eq:adaptive_tuning}
\\
f(t):= &(Kt +1)^{-p} ,\quad t\geq 0, 
\label{eq:learning_coef}
\end{align}
where $p\in (0,1]$ and $K>0$.  
Note that we update $\hat{M}(t)$ as $\hat{M}(t) = \hat{\theta}(t) ^2 / \hat{\eta}$.  
From the filtering theory \cite{Bouten_quant_filtering,bain2008fundamentals}, $dy(t) $ can be replaced by $dw(t) + 2 \theta x(t)$, where $w(t)$ is a standard Wiener process, and \eqref{eq:adaptive_tuning} then gives 
\begin{align*}
d\hat{\theta} (t) =& f(t) \hat{x}(t) \left\{ (\theta x(t) -  \hat{\theta}(t) \hat{x}(t) ) dt + \frac{1}{2} dw(t) \right\}
.
\end{align*}
If the noise $w(t)$ is removed, 
updating $\hat{\theta}(t)$ by Eq. \eqref{eq:adaptive_tuning} implies the same as instant gradient method of the cost function $| \theta x(t) - \hat{\theta}(t) \hat{x}(t)|^{2}$ with the weight $f(t)$.  
This is a type of Robbins-Monro algorithm for continuous-time problems \cite{RobbinsMonro1951,chen1994continuous,kushner2010stochastic}.  
Clearly, if $x(t) = \hat{x}(t) \neq 0$ for all $t\geq 0$, then the parameter tuning law is the continuous-time Robbins-Monro algorithm, which is guaranteed to converge to the true parameter.  
Unfortunately, 
 the assumption $x(t) = \hat{x}(t) \neq 0$ $\forall t\geq 0$ may not hold; therefore, 
we need to seek the condition when the parameter $\hat{\theta}(t)$ converges to the region described by \eqref{eq:convergence_condition_tv}.  
Note that the true parameter $\theta$ cannot be an equilibrium point of the system \eqref{eq:adaptive_tuning}. 
Thus, the noise $w$ is unavoidable, so we examine how to choose the parameter $(K, p)$ and obtain the accurate estimate asymptotically.

\begin{rem}

Because each unknown parameter is a positive constant, the adaptive parameter $\hat{\theta}(t)$ must to be positive.  
However, the solution of \eqref{eq:adaptive_tuning} is not ensured to be positive, 
so when $\hat{\theta}(t) $ becomes negative, we replace it with $0$ or a small positive number in practical implementations.  

\end{rem}

\subsection{Asymptotic Property of the Estimate}
	
	Here, we describe that the choice of the parameters $p$ and $K$ of \eqref{eq:learning_coef} is valid from the following proposition.  
	For convenience, we write $\mathbb{E}_{w} [ \bullet ] \equiv \mathbb{E}_{w} [ \bullet | \rho (0), \hat{\rho}(0) ]$.  
	
	\begin{prop}
	\label{prop:statistical_property_tuning}
	
	Suppose that a pair of initial states $(\rho (0), \hat{\rho}(0))$ is 
	in some $(\rho _{n},\rho_{m})$, $m\neq J$, and $u(t)=0$, and considering \eqref{eq:adaptive_tuning} with $p \in \mathbb{R}$ and $K>0$, the followings hold.  
	\begin{enumerate}
	\item For the mean of the $\hat{\theta}(t)$, 
	\begin{align*}
	\lim _{t \to \infty} 
	\mathbb{E}_{w}[\hat{\theta}(t)] =&
	\left\{
	\begin{array}{ll}
	\mbox{(depend on $\hat{\theta}(0)$)} ,& p>1,
	\\
	\theta \frac{J-n}{J-m},   & p \in (- \infty , 1].
	\end{array}
	\right.
	\end{align*}
	
	\item For the variance of the $\hat{\theta}(t)$, $V(\hat{\theta}(t)) := \mathbb{E}_{w}[ (\hat{\theta}(t) - \mathbb{E}_{w}[\hat{\theta}(t)] )^{2} ]$, 
	\begin{align*}
	\limsup _{t \to \infty}
	V(\hat{\theta}(t)) 
	\leq & \ 
	\frac{1}{8}, \quad  p>1,
	\\
	\lim _{t \to \infty}
	V(\hat{\theta}(t)) 
	 =& 
	\left\{
	\begin{array}{ll}
	0, & p \in (0,1],
	\\
	\frac{1}{8},   & p =0, 
	\\
	\infty , & p<0.
	\end{array}
	\right.
	\end{align*}

	\end{enumerate}

	\end{prop}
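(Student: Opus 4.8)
The plan is to exploit the fact that, under the stated hypotheses, the output process driving the tuning law degenerates to a deterministic constant plus white noise, which linearizes \eqref{eq:adaptive_tuning}. First I would observe that each $\rho_n$ is a genuine fixed point of \eqref{eq:sme} with $u=0$: since $J_z\rho_n=\rho_n J_z=(J-n)\rho_n$, the Hamiltonian commutator, the double commutator, and the measurement coefficient $J_z\rho_n+\rho_n J_z-2\mathrm{Tr}[J_z\rho_n]\rho_n$ all vanish at $\rho_n$. Hence with $\rho(0)=\rho_n$, $\hat\rho(0)=\rho_m$ and $u\equiv 0$ we have $\rho(t)\equiv\rho_n$ and $\hat\rho(t)\equiv\rho_m$ almost surely, so $x(t)\equiv J-n=:x$ and $\hat x(t)\equiv J-m=:\hat x$ are constants, with $\hat x\neq 0$ precisely because $m\neq J$. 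Substituting $dy=dw+2\theta x\,dt$ turns \eqref{eq:adaptive_tuning} into the linear, time-inhomogeneous Ornstein--Uhlenbeck equation $d\hat\theta=f(t)\hat x\{(\theta x-\hat x\hat\theta)\,dt+\tfrac12 dw\}$, whose mean and variance can be computed in closed form.

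For the mean I would take $\mathbb{E}_w$ to get the linear ODE $\dot\mu=-\hat x^2 f\mu+\hat x\theta x f$ and solve it with the integrating factor $e^{\hat x^2 F(t)}$, where $F(t):=\int_0^t f(s)\,ds$. This yields $\mu(t)=e^{-\hat x^2 F(t)}\hat\theta(0)+\tfrac{\theta x}{\hat x}\bigl(1-e^{-\hat x^2 F(t)}\bigr)$. The dichotomy in part (1) then reduces entirely to whether $F(\infty)$ is finite: since $f(t)=(Kt+1)^{-p}$, one has $F(\infty)=\infty$ for $p\le 1$ and $F(\infty)<\infty$ for $p>1$. Thus for $p\le1$ the initial term is forgotten and $\mu(t)\to\theta x/\hat x=\theta(J-n)/(J-m)$, whereas for $p>1$ the limit retains the $e^{-\hat x^2 F(\infty)}\hat\theta(0)$ contribution and hence depends on $\hat\theta(0)$.

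For the variance I would center the process, apply the It\^o formula to $(\hat\theta-\mu)^2$, and take expectation to obtain $\dot V=-2\hat x^2 f V+\tfrac14\hat x^2 f^2$ with $V(0)=0$, whose solution is $V(t)=\tfrac{\hat x^2}{4}e^{-2\hat x^2 F(t)}\int_0^t e^{2\hat x^2 F(s)}f(s)^2\,ds$. The key device is the substitution $r=F(t)-F(s)$, which recasts this as $V(t)=\tfrac{\hat x^2}{4}\int_0^{F(t)}e^{-2\hat x^2 r}f(s(r))\,dr$, where $s(r)$ is defined by $F(s(r))=F(t)-r$. With this representation each case is immediate. For $p\ge0$ we have $f\le1$, giving the uniform bound $V(t)\le\tfrac18\bigl(1-e^{-2\hat x^2 F(t)}\bigr)\le\tfrac18$, which already delivers $\limsup_t V\le\tfrac18$ and covers $p>1$. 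For $p\in(0,1]$, $F(t)\to\infty$ forces $s(r)\to\infty$ and hence $f(s(r))\to0$ for each fixed $r$, so dominated convergence (with integrable majorant $e^{-2\hat x^2 r}$) gives $V(t)\to0$. For $p=0$, $f\equiv1$ and the integral evaluates explicitly to $V(t)=\tfrac18(1-e^{-2\hat x^2 t})\to\tfrac18$. For $p<0$, $f(s(r))\to\infty$ for each fixed $r$, and Fatou's lemma forces $V(t)\to\infty$.

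The computations for the mean and for the variance ODE are routine once the linearization is in place; the step requiring care is the variance asymptotics. In particular, the change of variables $r=F(t)-F(s)$ and the justification of the interchange of limit and integral (dominated convergence versus Fatou) is where the distinct behaviors originate. The subtle point is understanding why $p>1$ produces a strictly positive finite limit rather than $0$: because $F$ is bounded, $s(r)$ stays bounded as $t\to\infty$ for fixed $r$, so $f(s(r))$ does not vanish, which is exactly why only the inequality $\limsup V\le\tfrac18$, and not convergence to $0$, can be asserted in that regime. Verifying that $\rho_n$ and $\rho_m$ are genuine fixed points, so that the reduction to a scalar linear SDE is legitimate, is the other place to be careful.
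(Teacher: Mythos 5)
Your proposal is correct, and its backbone coincides with the paper's: both reduce the problem to the scalar linear SDE obtained by substituting $dy = dw + 2\theta x\,dt$ with $x \equiv J-n$, $\hat{x} \equiv J-m \neq 0$ frozen at the equilibrium values, and both arrive at the same variation-of-constants formula $V(\hat{\theta}(t)) = \frac{\hat{x}^{2}}{4} e^{-2\hat{x}^{2} F(t)} \int_{0}^{t} e^{2\hat{x}^{2} F(\tau)} f(\tau)^{2}\, d\tau$ with $F(t) = \int_{0}^{t} f(\tau)\,d\tau$. Where you genuinely diverge is in extracting the asymptotics of this integral: the paper splits $[0,t]$ at an arbitrary $\tau_{0}$, uses the monotonicity of $f$ together with $f^{2} \leq f$ (for $p>0$) or $f^{2} \geq f$ (for $p<0$) to get explicit upper/lower bounds after the substitution $s = F(\tau)$, and then sends $\tau_{0} \to \infty$; you instead make the single change of variables $r = F(t) - F(s)$, rewriting $V(\hat{\theta}(t)) = \frac{\hat{x}^{2}}{4}\int_{0}^{F(t)} e^{-2\hat{x}^{2} r} f(s_{t}(r))\,dr$, and then invoke dominated convergence (majorant $e^{-2\hat{x}^{2}r}$, valid since $f \leq 1$ for $p \geq 0$) for the cases $p \in (0,1]$ and $p > 1$, and Fatou's lemma for $p < 0$. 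Your route is cleaner and makes the dichotomy transparent: everything hinges on whether $s_{t}(r) \to \infty$ (so $f(s_{t}(r)) \to 0$, giving $V \to 0$) or $s_{t}(r)$ stays bounded because $F(\infty) < \infty$ (so only $\limsup V \leq 1/8$ survives), whereas the paper's splitting argument obscures this by requiring a separate $\varepsilon$-management of the $\tau_{0}$-dependent terms. Two further points in your favor: you actually prove part (1) on the mean via the integrating factor $e^{\hat{x}^{2}F(t)}$, which the paper dismisses with ``we only prove the convergence of $V(\hat{\theta}(t))$,'' and you verify that $\rho_{n}$ and $\rho_{m}$ are genuine fixed points of \eqref{eq:sme} and \eqref{eq:adaptive_sde} with $u \equiv 0$ (all three coefficient terms vanish), a fact the paper uses silently when asserting $x = J-n$ and $\hat{x} = J-m$.
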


	\begin{proof}
	Denote the  integral of $f(t)$ by 
	\begin{align}
	F(t) :=& \int _{0}^{t} f(\tau ) d\tau
	\nonumber \\
	=& \left\{
	\begin{array}{ll}
	\frac{1}{K(1-p)} \{ (Kt +1) ^{1-p} -1 \},  & p \in \mathbb{R} \setminus \{ 1 \} ,
	\\
	\frac{1}{K} \ln (Kt + 1), & p=1.
	\end{array}
	\right.
	\label{eq:int_f}
	\end{align}
	We only prove the convergence of $V(\hat{\theta}(t)) $. Note that $x=J-n$ and $\hat{x} = J-m $ from the assumption.  
	Then, the explicit solution of $V(\hat{\theta}(t)) $ is 
	\begin{align*}
        V(\hat{\theta} (t) ) = & e^{-2\hat{x}^{2} F(t)} V(\hat{\theta} (0) )  
        \nonumber \\ 
        & 
        + 
        \frac{\hat{x}^{2}}{4} e^{-2\hat{x}^{2} F(t)} \int _{0}^{t} e^{2\hat{x}^{2} F(\tau ) } f(\tau )^2 d\tau 
        .
        \end{align*}
	As $\theta$ is a deterministic uncertain parameter and $\hat{M}(0)$ is given, $V(\hat{\theta} (0) ) =0$.  
	If $p=0$, it implies that $f(t)=1$ and therefore, the claim of the theorem trivially holds.  
	The other cases are as follows.  
	\begin{enumerate}
	\item If $p \in (0, \infty ) $, $f(t)^{2} \leq f(t)$ because $f(t) \in [0,1]$ and $f(t) \leq f(\tau _{0})$ for all $t \geq \tau _{0}$, where $\tau _{0}>0$ is arbitrary chosen.  
	From simple calculation and using the above-mentioned properties, 
	\begin{align*}
	&\int _{0}^{t} e^{2\hat{x}^{2} F(\tau ) } f(\tau )^2 d\tau 
	\\
	=&
	\int _{0}^{\tau _{0}} e^{2\hat{x}^{2} F(\tau ) } f(\tau )^2 d\tau 
	+
	\int _{\tau _{0}}^{t} e^{2\hat{x}^{2} F(\tau ) } f(\tau )^2 d\tau 
	\\
	\leq &
	\int _{0}^{\tau _{0}} e^{2\hat{x}^{2} F(\tau ) } f(\tau ) d\tau 
	+
	f(\tau _{0}) \int _{\tau _{0}}^{t} e^{2\hat{x}^{2} F(\tau ) } f(\tau ) d\tau 
	\\
	=&
	\int _{F(0)}^{F(\tau _{0})} e^{2\hat{x}^{2} s } ds 
	+
	f(\tau _{0}) \int _{F(\tau _{0} ) }^{F(t)} e^{2\hat{x}^{2} s }  ds 
	\\
	=&
	\frac{1}{2\hat{x}^{2}} \Big{\{} 
	e^{2\hat{x}^{2}F(\tau _{0})} - e^{2\hat{x}^{2}F(0)} 
	\\ &
	\quad \quad \quad 
	+
	f(\tau _{0} ) e^{2\hat{x}^{2}F(t)}
	-
	f(\tau _{0} ) e^{2\hat{x}^{2}F(\tau _{0})}
	\Big{\}},
	\end{align*}
	and therefore, 
	\begin{align*}
        V(\hat{\theta} (t) ) \leq & \frac{f(\tau _{0})}{8} (1 - e^{-2\hat{x}^{2} (F(t) - F(\tau _{0}) }  )
        \nonumber \\ 
        &
        + 
        \frac{1}{8}
        e^{-2\hat{x}^{2} F(t)} 
        \Big{\{} e^{2\hat{x}^{2}F(\tau _{0})} - e^{2\hat{x}^{2}F(0)} \Big{\}} . 
        \end{align*}
         As $\tau _{0}$ can be chosen to be arbitrarily large, the first term of the right-hand side of the above equation can be arbitrarily small.  
        As $t \to \infty$, $e^{-2\hat{x}^{2} F(t)} \to 0$ for $p \in (0,1]$ and 
        $e^{-2\hat{x}^{2} F(t)} \to e^{-2\hat{x}^{2}/(K(p-1)) }>0$ for $p>1$.  
        Then, the last term of the right-hand side of the equation remains finite and it is less than $1/8$.  
        Therefore, the claim of the theorem holds for $ p \in (0 , \infty) $.

	\item If $p\in ( -\infty , 0)$, $f(t)^{2} \geq f(t)$ because $f(t) \geq 1$ and $f(t) \geq f(\tau _{0})$ for all $t \geq \tau _{0}$, where $\tau _{0}>0$ is arbitrary chosen.  
	From simple calculation and using the above-mentioned properties, 
	\begin{align*}
	&\int _{0}^{t} e^{2\hat{x}^{2} F(\tau ) } f(\tau )^2 d\tau 
	\\
	\geq &
	\int _{0}^{\tau _{0}} e^{2\hat{x}^{2} F(\tau ) } f(\tau ) d\tau 
	+
	f(\tau _{0}) \int _{\tau _{0}}^{t} e^{2\hat{x}^{2} F(\tau ) } f(\tau ) d\tau 
	\\
	\geq &
	\frac{f(\tau _{0})}{2\hat{x}^{2}} \Big{\{} 
	e^{2\hat{x}^{2}F(t)}
	-
	e^{2\hat{x}^{2}F(\tau _{0})}
	\Big{\}}
	\end{align*}
	and therefore, for $t>\tau _{0}$
	\begin{align*}
	V(\hat{\theta} (t) ) \geq & 
	\frac{f(\tau_{0})}{8}
	\left( 1 - e^{-2\hat{x}^{2} (F(t) - F(\tau _{0}))}  \right)
	\end{align*}
	holds.  
	The second term of the right-hand side of the above inequality vanishes as $t \to \infty$ and 
	because $f(\tau _{0})$ can be arbitrarily large, if $\tau _{0} $ is large, 
	then $V(\hat{\theta} (t) ) \to \infty$ as $t \to \infty$.

\end{enumerate}

	\end{proof}

	From Proposition \ref{prop:statistical_property_tuning}, if $\rho (t) $ and $\hat{\rho}(t)$ are in the same equilibrium state, 
	the parameter $\hat{\theta}(t)$ updated by \eqref{eq:adaptive_tuning} converges to the true value with probability one.  
	Unfortunately, since the true state $\rho (t)$ is not accessible, we cannot confirm 
	whether $\rho(t)$ and $\hat{\rho}(t)$ are practically in the same equilibrium state.  
	To avoid being trapped in different equilibrium points before learning the parameter accurately, 
	we employ feedforward control in the following subsection.  
	
	\begin{rem}
	
	A key to prove Proposition \ref{prop:statistical_property_tuning} is that, $f: [0 , \infty ) \to [0,1]$ is a non-increasing function with $\lim _{t \to \infty} f(t) =0$, while its integral $F(t) = \int _{0}^{t} f(\tau ) d\tau $ becomes a non-decreasing function with $\lim _{t \to \infty} F(t) = \infty$.  
	This is a minor difference from the continuous-time Robbins-Monro algorithms because they require the square integrability of the function $f(t)$ (e.g., \cite[Theorem 1]{chen1994continuous}).  
	Searching for a preferable function for learning $\theta$ is beyond the scope of this study, and
	we only use \eqref{eq:learning_coef} and do not consider other functions.  
	We established some convergence rate problems in \cite{enami2021sice},  which can be referred for details.  

	\end{rem}

\subsection{Local Convergence Property}
	
	In this subsection, we evaluate our tuning algorithm \eqref{eq:adaptive_tuning} with the following control input.  
	\begin{align*}
	u(t) =& u_{FB} (\hat{\rho}(t) ) + u_{FF}(t),
	\end{align*}
	where $u_{FF}(t) \in [0,\infty)$ is 
	a strictly decreasing, bounded continuous function with $\lim_{t \to \infty} u_{FF}(t) =0$,
	and $u_{FB}$ is a stabilizing feedback control law if $\hat{\theta}(t)$ satisfies the condition of Corollary \ref{cor:LiangAminiMason2021} (e.g., of (4.22) or (4.23) in \cite{LiangAminiMason2021}.)  
	The role of $u_{FF}$ is to eliminate the $\hat{\rho}(t)$ from the target state $\rho_{\bar{n}}$ before learning the parameter accurately.  
	Then, our proposed method ensures local convergence under certain assumptions.  
	For convenience, we write $\mathbb{E}_{w}^{\prime}[\bullet ] := \mathbb{E}_{w}[\bullet | \rho(t_{0}), \hat{\rho}(t_{0})]$.  
	
	\begin{thm}
	\label{thm:main_result}

	Let $t_{0}>0$ satisfy 
	$f(t _{0}) < \varepsilon $ for a given sufficiently small $\varepsilon >0$.  
	Considering $\rho (t)$ and $\hat{\rho}(t)$ are the solutions of \eqref{eq:sme} and \eqref{eq:adaptive_sde} 
	starting from $\rho (t_{0})$, $\hat{\rho}(t_{0}) \in \mathcal{S}(\mathbb{\mathbb{C}^{N}})$, respectively, we  
	choose the feedforward control $u_{FF}(t)$ that satisfies 
	$u_{FF}(t) \leq f(t) ^2$ 
	for all $t \geq t_{0}$ 
	and the feedback control $u_{FB}$ that satisfies $\mathbb{E}_{w}^{\prime}[| u_{FB}(\hat{\rho}) |] = O(\varepsilon ^2)$ 
	if $\hat{\rho}$ satisfies $ \mathbb{E}_{w}^{\prime}[\| \hat{\rho} - \rho _{\bar{n}} \| _{\mathrm{Tr}} ]< \varepsilon $.  
	Let $\hat{\theta}(t)$ be the solution of the parameter tuning algorithm \eqref{eq:adaptive_tuning} with $p \in (0.5 ,1]$ and $K>0$ and its initial value $\hat{\theta}(t_{0})$ satisfy $| 1- \hat{\theta}(t_{0}) / \theta | < \varepsilon $.  
	Suppose that $\max \{  \mathbb{E}_{w}^{\prime}[ \| \rho (t) - \rho _{\bar{n}} \| _{\mathrm{Tr}} ], 
	 \mathbb{E}_{w}^{\prime} [ \| \hat{\rho}(t) - \rho _{\bar{n}} \| _{\mathrm{Tr}} ]\} < \varepsilon $ and $\max \{  \mathbb{E}_{w}^{\prime}[ \| \rho (t) - \rho _{\bar{n}} \| _{\mathrm{Tr}}^2 ], 
	 \mathbb{E}_{w}^{\prime} [ \| \hat{\rho}(t) - \rho _{\bar{n}} \| _{\mathrm{Tr}}^2 ]\} < \varepsilon ^2$ for all $t\geq t_{0}$.  
	Besides, assume that the following inequality holds for almost all $t \geq t_{0}$ 
	\begin{align}
	\Delta (\rho (t), \hat{\rho}(t), \hat{\theta}(t) ) \geq 0 \quad \mbox{ a.s.,}
	\label{eq:assumption_main_theorem}
	\end{align}
	and the equality holds iff $V_{\rho(t)}(J_{z}) = V_{\hat{\rho}(t)}(J_{z})=0$, 
	where 
	\begin{align*}
	&\Delta (\rho , \hat{\rho}, \hat{\theta}) 
	\\
	:=& \left(  3V_{\rho}(J_{z}) ^{2} + 2 V_{\rho}(J_{z}) V_{\hat{\rho}}(J_{z}) + 3V_{\hat{\rho}}(J_{z})^2 \right) 
	\nonumber \\
	& - 2 V_{\hat{\rho}} (J_{z}) \left( \mathrm{Tr} [J_{z} (\rho - \hat{\rho})]   +  \mathrm{Tr}[J_{z}\hat{\rho}] \left( 1 - \frac{\hat{\theta}}{\theta}\right) \right)
	,
	\end{align*}
	$V_{\rho}(J_{z}):= \mathrm{Tr}[J_{z}^{2} \rho ] - \mathrm{Tr}[J_{z} \rho]^2$.  
	Then, for $\bar{n}\neq J$,
	\begin{align*}
	\lim _{t \to \infty} (\rho(t) , \hat{\rho}(t)) = (\rho_{\bar{n}}, \rho _{\bar{n}})  \mbox{ and } 
	\lim _{t \to \infty} \hat{\theta}(t) = \theta \ \mbox{a.s.}
	\end{align*}

	\end{thm}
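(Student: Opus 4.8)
The plan is to reduce the joint claim to a single supermartingale-type argument for a combined energy functional of $(\rho,\hat{\rho},\hat{\theta})$. First I would pass to the innovation form by substituting $dy(t)=dw(t)+2\theta x(t)\,dt$, exactly as done just before the statement, which turns \eqref{eq:adaptive_tuning} into a linear SDE for the parameter error $\tilde{\theta}(t):=\hat{\theta}(t)-\theta$:
\begin{align*}
d\tilde{\theta} = -f(t)\hat{x}(t)^{2}\tilde{\theta}\,dt + f(t)\hat{x}(t)\theta\big(x(t)-\hat{x}(t)\big)\,dt + \tfrac{1}{2}f(t)\hat{x}(t)\,dw(t),
\end{align*}
so that $\tilde{\theta}$ feels a dissipation $-f\hat{x}^{2}\tilde{\theta}$, a state-mismatch drift proportional to $x-\hat{x}$, and additive noise of size $f\hat{x}$. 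I would also record the algebraic simplification that, writing $\delta:=\theta x-\hat{\theta}\hat{x}$ for the output error, the bracket in $\Delta$ equals $\mathrm{Tr}[J_{z}(\rho-\hat{\rho})]+\mathrm{Tr}[J_{z}\hat{\rho}](1-\hat{\theta}/\theta)=\delta/\theta$, whence
\begin{align*}
\Delta(\rho,\hat{\rho},\hat{\theta}) = 3V_{\rho}(J_{z})^{2}+2V_{\rho}(J_{z})V_{\hat{\rho}}(J_{z})+3V_{\hat{\rho}}(J_{z})^{2}-\tfrac{2}{\theta}V_{\hat{\rho}}(J_{z})\,\delta.
\end{align*}
This identifies $\Delta$ as the Itô drift of a quadratic functional of the conditional means $x,\hat{x}$: its second-order (diffusion) part, built from the $dw$-coefficients $2\theta V_{\rho}(J_{z})$ and $2\hat{\theta}V_{\hat{\rho}}(J_{z})$ of $x$ and $\hat{x}$, produces the variance-quadratic terms, while the measurement drift produces the $-\tfrac{2}{\theta}V_{\hat{\rho}}(J_{z})\delta$ term.

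Next I would assemble a nonnegative functional $W(t)$ combining this state part with the parameter-error second moment $\mathbb{E}_{w}^{\prime}[\tilde{\theta}^{2}]$, and compute $\tfrac{d}{dt}\mathbb{E}_{w}^{\prime}[W]$ by Itô. The design is such that the dominant contribution is $-\mathbb{E}_{w}^{\prime}[\Delta]$, with the remaining terms shown to be integrable in $t$. Those residual terms are of two kinds: the Hamiltonian contributions, controlled by the hypotheses $u_{FF}(t)\le f(t)^{2}$ and $\mathbb{E}_{w}^{\prime}[|u_{FB}(\hat{\rho})|]=O(\varepsilon^{2})$ near the target, and the additive tuning-noise term $\tfrac14 f^{2}\hat{x}^{2}$ from the $\tilde{\theta}$-equation. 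Here the restriction $p\in(0.5,1]$ is essential and plays a dual role: $2p>1$ gives $\int_{0}^{\infty}f(\tau)^{2}\,d\tau<\infty$, so the accumulated injected noise and the feedforward perturbation are finite, while $p\le 1$ gives $\int_{0}^{\infty}f(\tau)\,d\tau=\infty$ (that is $F(t)\to\infty$ in \eqref{eq:int_f}), so the dissipation integrates to infinity and the noise floor is eventually beaten.

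With the standing hypothesis \eqref{eq:assumption_main_theorem} that $\Delta\ge 0$ a.s. (equality iff $V_{\rho}(J_{z})=V_{\hat{\rho}}(J_{z})=0$), the drift of $W$ is nonpositive modulo the integrable remainder, so $\mathbb{E}_{w}^{\prime}[W(t)]$ is an almost-supermartingale; the Robbins--Siegmund convergence lemma then gives a.s. convergence of $W$ together with $\int_{0}^{\infty}\mathbb{E}_{w}^{\prime}[\Delta(t)]\,dt<\infty$. A Barbalat/LaSalle argument forces $V_{\rho(t)}(J_{z}),V_{\hat{\rho}(t)}(J_{z})\to 0$, i.e. both states approach the set of $J_{z}$-eigenstates; combined with the assumed bounds $\mathbb{E}_{w}^{\prime}[\|\rho(t)-\rho_{\bar{n}}\|_{\mathrm{Tr}}],\,\mathbb{E}_{w}^{\prime}[\|\hat{\rho}(t)-\rho_{\bar{n}}\|_{\mathrm{Tr}}]<\varepsilon$ for small $\varepsilon$, the only admissible eigenstate is $\rho_{\bar{n}}$, giving $(\rho(t),\hat{\rho}(t))\to(\rho_{\bar{n}},\rho_{\bar{n}})$ a.s.; this is also what keeps $\hat{\theta}(t)$ inside the region of Corollary \ref{cor:LiangAminiMason2021} so that $u_{FB}$ remains stabilizing. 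Finally, since $\bar{n}\neq J$ the eigenvalue $J-\bar{n}\neq 0$, so once the states sit near $\rho_{\bar{n}}$ we have $\hat{x}(t)\to J-\bar{n}\neq 0$ and the dissipation rate $f\hat{x}^{2}$ is bounded below by a positive multiple of $f$; feeding this back into the $\tilde{\theta}$-equation and repeating the variance estimate of Proposition \ref{prop:statistical_property_tuning} (again using $\int f=\infty$, $\int f^{2}<\infty$) yields $\mathbb{E}_{w}^{\prime}[\tilde{\theta}^{2}]\to 0$ and hence $\hat{\theta}(t)\to\theta$ a.s.

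The \emph{main obstacle} is the two–time–scale coupling between the parameter error and the state errors: the dissipation and noise acting on $\tilde{\theta}$ carry the vanishing weight $f(t)$, whereas the state part of $W$ does not, and the cross-terms $f\hat{x}\theta(x-\hat{x})\tilde{\theta}$ and $-\tfrac{2}{\theta}V_{\hat{\rho}}(J_{z})\delta$ mix the two scales. Making $W$ a genuine almost-supermartingale requires absorbing these cross-terms into the negative-definite part, which is where the $\varepsilon$-closeness of the states (so that $|x-\hat{x}|=O(\varepsilon)$ by Lipschitz continuity of $\rho\mapsto\mathrm{Tr}[J_{z}\rho]$ in trace norm) and the sign condition $\Delta\ge 0$ must be used jointly and with care. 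A secondary difficulty is that $\theta$ is never an equilibrium of \eqref{eq:adaptive_tuning}, so there is a persistent noise floor that closes only because $f\to 0$; securing persistency of excitation (i.e. $\hat{x}^{2}$ bounded away from $0$, which fails exactly at $\bar{n}=J$) and justifying the exchange of expectation, time-derivative and Itô integration for the state-dependent, a priori unbounded coefficients are the steps demanding the most rigor.
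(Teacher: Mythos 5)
Your proposal follows essentially the same route as the paper's proof: the paper also works with a combined functional $G = C_{x} + C_{\theta} + V_{\rho}(J_{z}) + V_{\hat{\rho}}(J_{z})$ (your $W$), substitutes the innovation form $dy = dw + 2\theta x\,dt$, shows via It\^{o}/Dynkin that the drift of $G$ is bounded by $-\mathbb{E}_{w}^{\prime}[\Delta + b(t)|J-\bar{n}|^{2}C_{\theta}]$ plus an integrable remainder (using exactly your dual role of $p \in (0.5,1]$: $\int f^{2} < \infty$ for the noise/feedforward, $\int f = \infty$ for the dissipation), and then concludes $\Delta \to 0$ hence state convergence, and $C_{\theta} \to 0$ from non-integrability of $f$ — a stochastic LaSalle argument following Mao, which is the same machinery as your Robbins--Siegmund/Barbalat formulation. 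The identification of the bracket in $\Delta$ with $\delta/\theta$ and the persistency-of-excitation role of $\bar{n} \neq J$ also match the paper's reasoning, so your plan is a faithful reconstruction of the published proof.
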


	\begin{proof}
	
	See Appendix.  
	
	\end{proof}
	
	Some readers may think the assumptions of Theorem \ref{thm:main_result} are too strong to be valid in practice; however, several numerical experiments support that they may hold in many cases, one of which is demonstrated in the following section.

\section{NUMERICAL EXPERIMENTS}
\label{sec:numerical_experiments}

	In this section, we examine the proposed method numerically.  
	The dimension of the quantum system is $N=5$, and the Euler-Maruyama method is used with $0.01$ time step width. 
	We use the true parameters as $(\omega , M,\eta ) = (0.5, 1, 0.9)$ and the initial parameters of the adaptive system as $(\hat{\omega} , \hat{M}(0),\hat{\eta} ) = (1,25,1)$, 
	for which the system cannot be stabilized by merely using the feedback control in \cite{LiangAminiMason2021}.  
	The true initial state $\rho (0)$ is randomly generated for each realization and the initial adaptive state is fixed to $\hat{\rho}(0) = \frac{1}{N} I$.  
	The target state $\rho_{\bar{n}}$ is set as $\bar{n}=0$.  
	We set the control inputs as follows.  
	\begin{align*}
	u_{FF}(t) :=  f(t)^2, \quad 
	u_{FB} (\hat{\rho}) :=& 4 (1 - \mathrm{Tr}[\hat{\rho} \rho_{\bar{n}}]) ^{2}
	.  
	\end{align*}
	The parameters of \eqref{eq:learning_coef} are chosen as $(K,p) = (20,0.6)$, and then the simulation is run with 1000 realizations.   
	The results are shown in Figs. \ref{fig_N5K20p06ratio} and \ref{fig_N5K20p06dist}.  
	Fig. \ref{fig_N5K20p06ratio} represents the trajectories of the ratio $\hat{\theta}(t)/\theta$ and 
	Fig. \ref{fig_N5K20p06dist} represents the distance $d(t) = d_{B}((\rho (t) , \hat{\rho}(t)), (\rho_{\bar{n}},\rho_{\bar{n}}))$ \cite{LiangAminiMason2021}, 
	\begin{align*}
	& d_{B}((\rho , \hat{\rho}), (\rho_{n}, \rho_{m}))
	\\
	=&
	\sqrt{2 - 2 \sqrt{ \mathrm{Tr}[\rho \rho_{n}]}} 
	+ 
	\sqrt{2 - 2 \sqrt{ \mathrm{Tr}[\hat{\rho} \rho_{m}] } }
	.  
	\end{align*}
	We also evaluate whether the inequality \eqref{eq:assumption_main_theorem} holds in Fig. \ref{fig_N5K20p06assumption}.   
	From the figures, all sample trajectories of $\hat{\theta}(t)$ and $(\rho(t) , \hat{\rho}(t))$ appear to converge to $\theta$ and the target state $\rho _{0}$, respectively.  
	The inequality \eqref{eq:assumption_main_theorem} sometimes does not hold at the beginning of the simulations, 
	but all sample trajectories satisfy it after $t=450$, shown by the blue dashed line in  Fig. \ref{fig_N5K20p06assumption}, 
	until the states converge to the target states. 
	Moreover, even though our proposed method does not ensure satisfying the condition \eqref{eq:convergence_condition} at all times after a certain point, we confirmed that all sample trajectories of the $\hat{\theta}(t)/\theta$ ratio satisfy the condition of Corollary \ref{cor:LiangAminiMason2021}, i.e., 
	$\hat{\theta}(t)/\theta \in  (1 + \alpha_{0} , 1 + \beta_{0}) \simeq (0.889, 1.11)$, after $t=666$.  
	This result implies that the proposed method ensures that all sample trajectories are in the neighborhood of the true value with a significantly high probability.    
	Although we confirmed that $(K,p)=(20,0.3)$, which does not satisfy the condition of Theorem \ref{thm:main_result}, also works well, but the result is omitted due to the page limitation.

	\begin{figure}[thpb]
       \centering
       \includegraphics[scale=0.4]{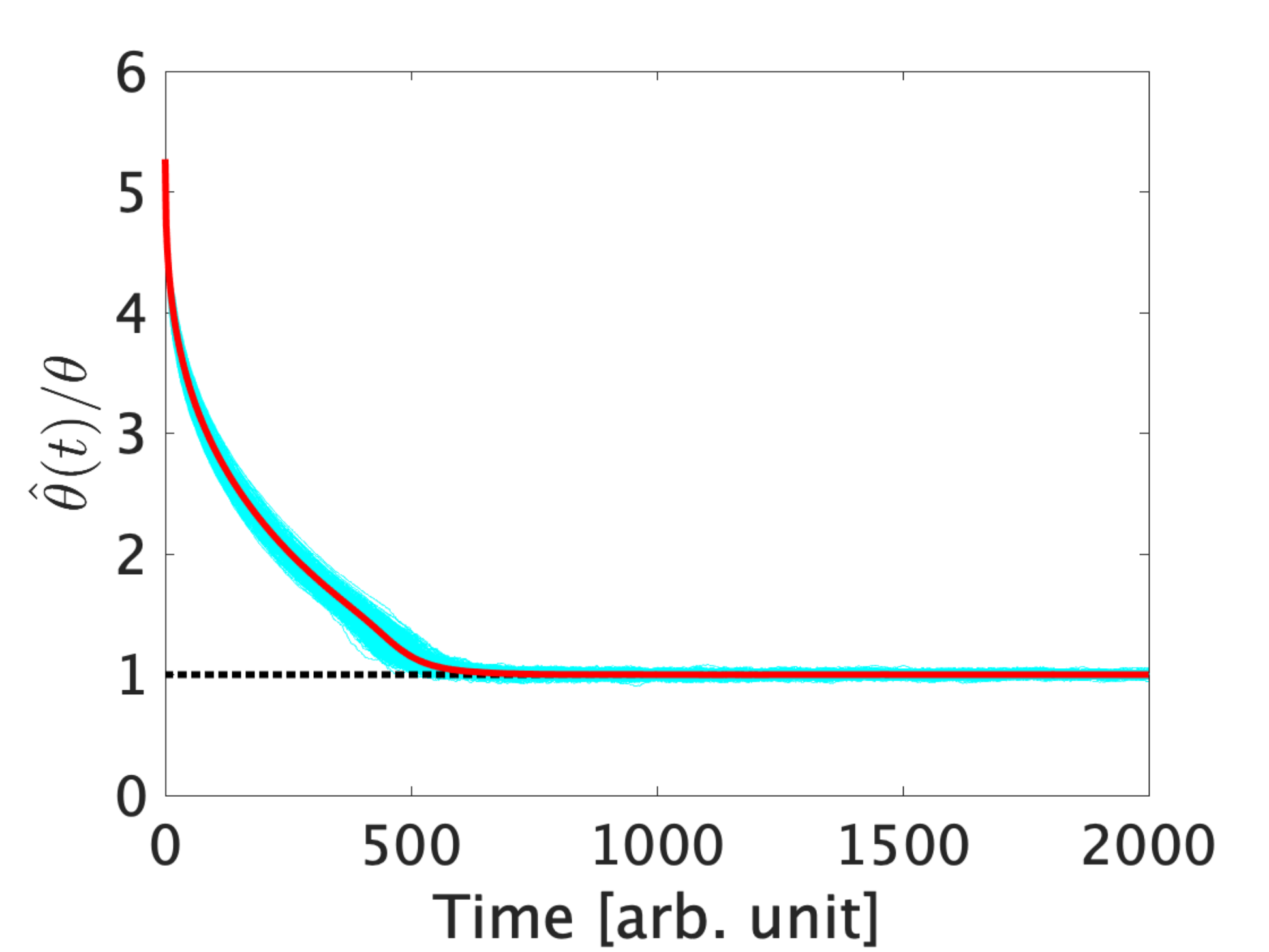}
       \caption{
       The trajectories of the ratio $\hat{\theta} (t) / \theta $ with parameters $(K,p) = (20, 0.6)$.  
       The solid red line represents the average trajectory over 1000 samples
       and the light blue lines represent 1000 sample realizations.  
       }
       \label{fig_N5K20p06ratio}
       \end{figure}
	
	\begin{figure}[thpb]
       \centering
       \includegraphics[scale=0.4]{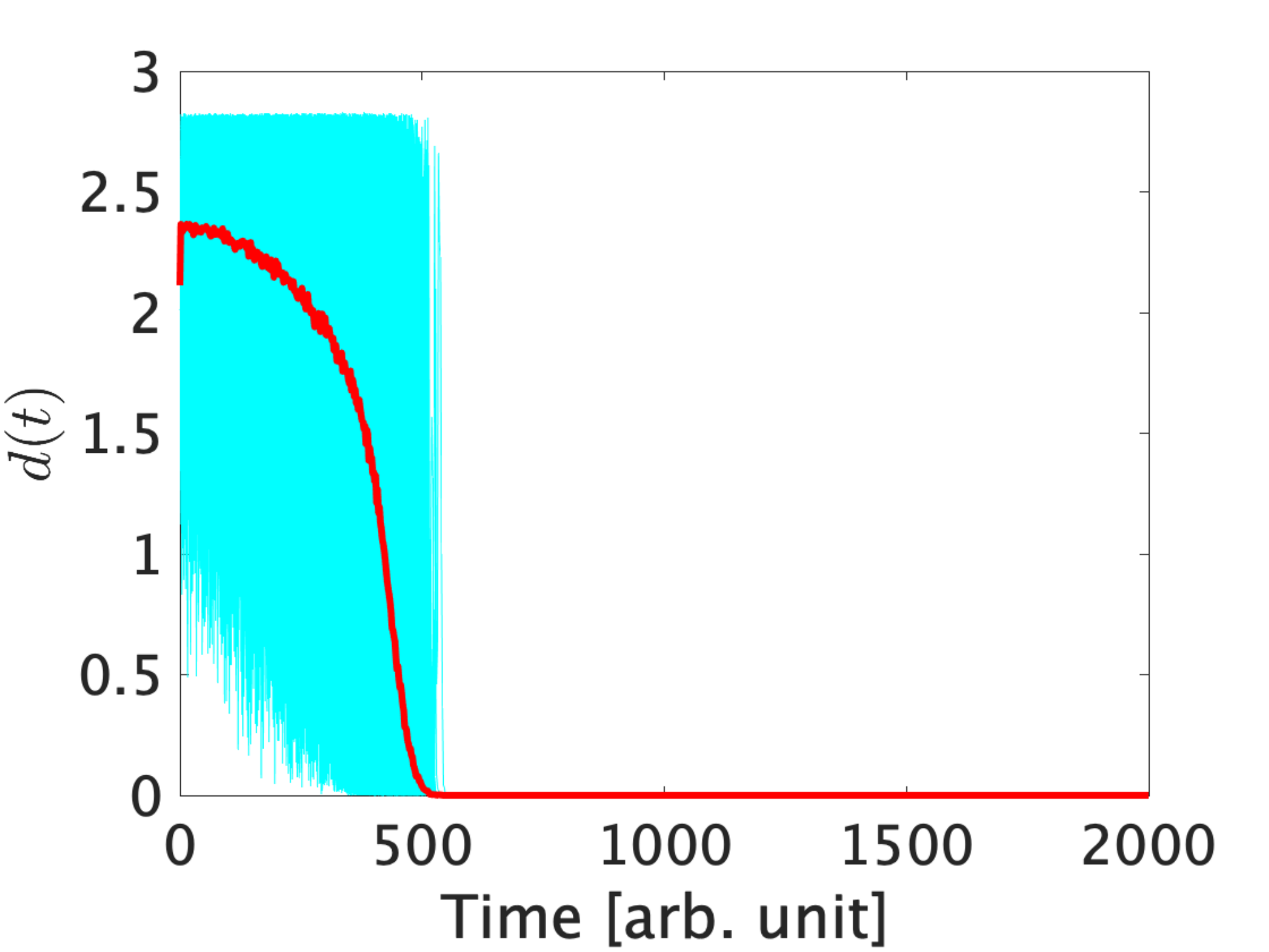}
       \caption{
       The trajectories of $d(t) $ with parameters $(K,p) = $(20, 0.6).  
       The solid red line represents the average trajectory over 1000 samples
       and the light blue lines represent 1000 sample realizations.  }
       \label{fig_N5K20p06dist}
       \end{figure}

	\begin{figure}[thpb]
       \centering
       \includegraphics[scale=0.4]{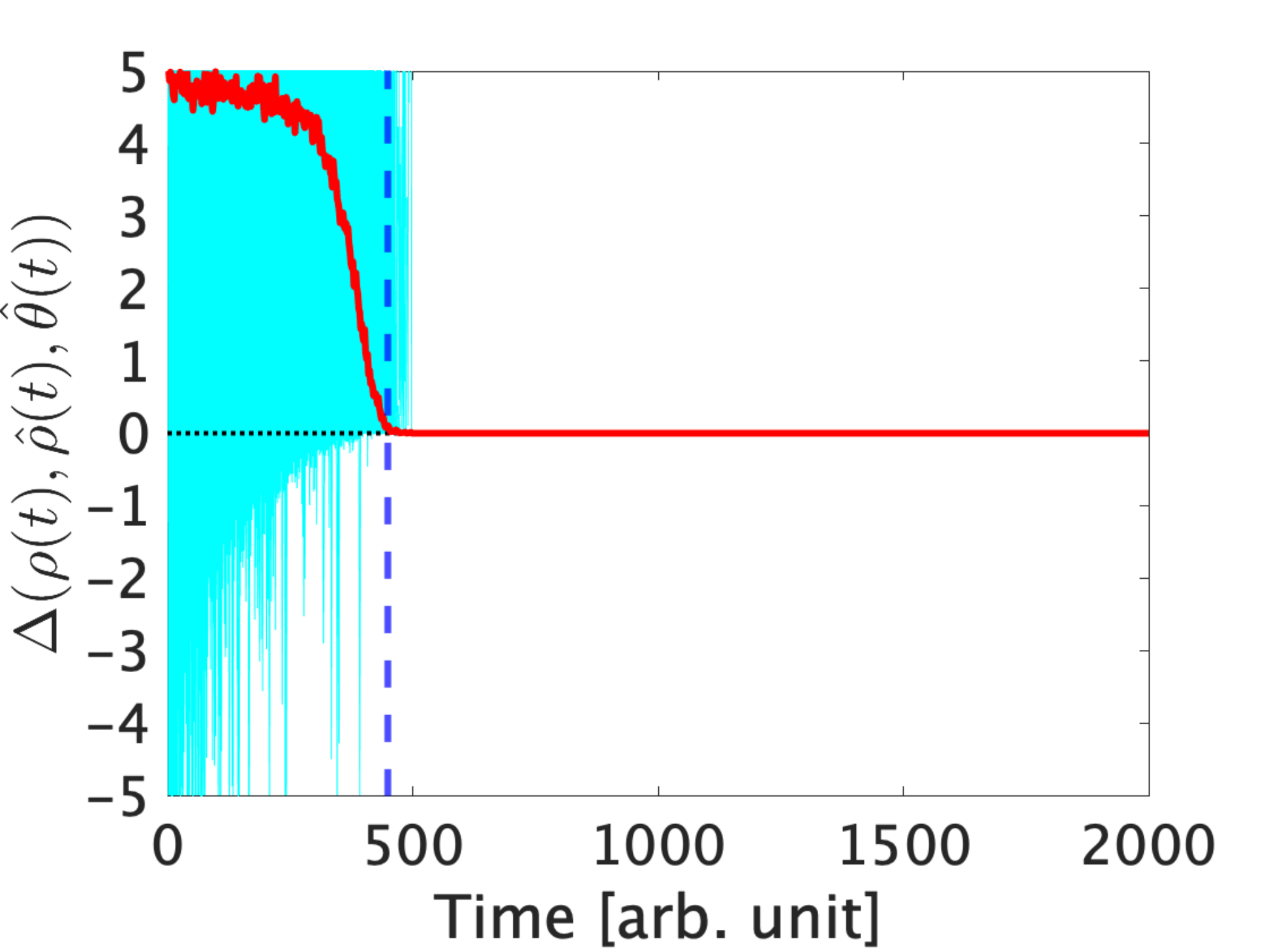}
       \caption{
       The trajectories of $\Delta (\rho (t),\hat{\rho}(t),\hat{\theta}(t)) $ with parameters $(K,p) = $(20, 0.6).  
       The solid red line represents the average trajectory over 1000 samples
       and the light blue lines represent 1000 sample realizations.  The blue dashed line represents the time 450.  }
       \label{fig_N5K20p06assumption}
       \end{figure}

\section{CONCLUSION AND FUTURE WORK}
\label{sec:conclusion}

	In this paper, we proposed an adaptive parameter tuning algorithm for robust stabilizing control of quantum angular momentum systems.  
	The asymptotic property of the estimate and local convergence of the states were evaluated analytically, and numerical experiments show that the proposed method works well for systems with large parametric uncertainty.
	
	The relaxation of Theorem \ref{thm:main_result}'s assumptions and the global convergence property are interesting works for future.

%%%%%%%%%%%%%%%%%%%%%%%%%%%%%%%%%%%%%%%%%%%%%%%%%%%%%%%%%%%%%%%%%%%%%%%%%%%%%%%%
\section*{ACKNOWLEDGMENTS}

The authors gratefully acknowledge the helpful comments and suggestions of the anonymous reviewers.

\appendix

\section*{Proof of Theorem \ref{thm:main_result}}

To prove Theorem \ref{thm:main_result}, we evaluate $G(\rho(t) , \hat{\rho}(t) , \hat{\theta}(t)) := C_{x}(t) + C_{\theta}(t) + V_{\rho (t)} (J_{z}) + V_{\hat{\rho}(t)}(J_{z}) $, where $C_{\theta} (t) := \left| 1 - \frac{\hat{\theta}(t)}{\theta} \right| ^{2}$, and $C_{x} (t) :=| x(t) - \hat{x}(t) |^2$.  
Our proof mainly follows the argument of the proof of Theorem 2.1 in \cite{mao1999stochastic}.

First, we evaluate $V_{\hat{\rho} (t)}(J_{z})$ and $V_{\rho (t)}(J_{z})$.  
\begin{lem}
\label{lem:variance}

 If $\mathbb{E}_{w}^{\prime}[ \| \hat{\rho}(t) - \rho _{\bar{n}}\| _{\mathrm{Tr}} ] < \varepsilon $ holds for some small $\varepsilon >0$, 
 then 
 \begin{align*}
V_{\hat{\rho}(t)}(J_{z}) = \varepsilon 
\hat{\alpha} (t) 
+ 
\varepsilon ^{2} \mathrm{Tr}[ (\hat{\rho}(t) - \rho _{\bar{n}}) J_{z}] ^{2}
,
\end{align*}
where $\hat{\alpha} (t) :=\mathrm{Tr}[ (\hat{\rho}(t) - \rho _{\bar{n}}) (J_{z} - (J -\bar{n})I_{n} )^{2}  ] $ is a nonnegative number and 
$\alpha (t)=0$ iff $\hat{\rho} (t) = \rho _{\bar{n}}$.  

\end{lem}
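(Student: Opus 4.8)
The plan is to treat $V_{\hat{\rho}(t)}(J_{z})$ as a quadratic functional of the deviation $\delta(t) := \hat{\rho}(t) - \rho_{\bar{n}}$ and expand it about the target pointer state. First I would record the two structural facts that drive everything: (i) both $\hat{\rho}(t)$ and $\rho_{\bar{n}}$ have unit trace, so $\mathrm{Tr}[\delta(t)] = 0$; and (ii) $\rho_{\bar{n}}$ is the spectral projector of $J_{z}$ for the eigenvalue $J-\bar{n}$, whence $J_{z}\rho_{\bar{n}} = (J-\bar{n})\rho_{\bar{n}}$, $\mathrm{Tr}[J_{z}\rho_{\bar{n}}] = J-\bar{n}$, $\mathrm{Tr}[J_{z}^{2}\rho_{\bar{n}}] = (J-\bar{n})^{2}$, and in particular $V_{\rho_{\bar{n}}}(J_{z}) = 0$. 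Fact (ii) is exactly what makes the target a zero-variance state, so the entire variance is carried by $\delta$.

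Next I would substitute $\hat{\rho} = \rho_{\bar{n}} + \delta$ into $V_{\hat{\rho}}(J_{z}) = \mathrm{Tr}[J_{z}^{2}\hat{\rho}] - \mathrm{Tr}[J_{z}\hat{\rho}]^{2}$. Using (i)--(ii) and expanding $\mathrm{Tr}[J_{z}\hat{\rho}]^{2} = \big((J-\bar{n}) + \mathrm{Tr}[J_{z}\delta]\big)^{2}$, the terms linear in $\delta$ collect (via $\mathrm{Tr}[\delta]=0$) exactly into $\mathrm{Tr}[\delta\,(J_{z}-(J-\bar{n})I_{n})^{2}] = \hat{\alpha}(t)$, leaving the purely quadratic remainder $\mathrm{Tr}[J_{z}\delta]^{2} = \mathrm{Tr}[(\hat{\rho}-\rho_{\bar{n}})J_{z}]^{2}$. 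This yields the exact decomposition of the variance into a first-order piece $\hat{\alpha}$ and a second-order correction. The $\varepsilon$ and $\varepsilon^{2}$ weights then come from the smallness hypothesis: since $\mathbb{E}_{w}^{\prime}[\|\hat{\rho}-\rho_{\bar{n}}\|_{\mathrm{Tr}}] < \varepsilon$, trace-norm/operator-norm duality bounds $\hat{\alpha}$ by $\|\delta\|_{\mathrm{Tr}}\,\|(J_{z}-(J-\bar{n})I_{n})^{2}\|$ (order $\varepsilon$) and the quadratic term by $\|J_{z}\|^{2}\|\delta\|_{\mathrm{Tr}}^{2}$ (order $\varepsilon^{2}$), which is precisely the separation of orders displayed in the statement.

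For the two side claims on $\hat{\alpha}$, I would rewrite $\hat{\alpha} = \mathrm{Tr}[\hat{\rho}\,(J_{z}-(J-\bar{n})I_{n})^{2}]$ using $\mathrm{Tr}[\rho_{\bar{n}}(J_{z}-(J-\bar{n})I_{n})^{2}] = 0$ (the projector lies in the kernel of $J_{z}-(J-\bar{n})I_{n}$). Since $(J_{z}-(J-\bar{n})I_{n})^{2} \geq 0$ and $\hat{\rho} \geq 0$, nonnegativity $\hat{\alpha} \geq 0$ is immediate. For the equality characterization I would invoke that $J_{z} = \mathrm{diag}(J,\dots,-J)$ has simple spectrum, so its eigenspace for $J-\bar{n}$ is one-dimensional; $\hat{\alpha}=0$ forces $\hat{\rho}$ to be supported on that line, hence $\hat{\rho}=\rho_{\bar{n}}$.

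The routine linear algebra is not the obstacle. The delicate point is the bookkeeping that matches the decomposition to the precise $\varepsilon$-weighted form: one must be consistent about whether $\hat{\alpha}$ and $\mathrm{Tr}[(\hat{\rho}-\rho_{\bar{n}})J_{z}]^{2}$ denote the raw deviations (already $O(\varepsilon)$ and $O(\varepsilon^{2})$) or their normalized $O(1)$ counterparts, and one must reconcile the sign of the quadratic correction with the positivity $V_{\hat{\rho}}(J_{z}) \geq 0$: the exact identity reads $V_{\hat{\rho}}(J_{z}) = \hat{\alpha} - \mathrm{Tr}[(\hat{\rho}-\rho_{\bar{n}})J_{z}]^{2}$, which is nonnegative precisely because variance nonnegativity gives $\hat{\alpha} \geq \mathrm{Tr}[(\hat{\rho}-\rho_{\bar{n}})J_{z}]^{2}$. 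Getting that order-and-sign accounting right so that it feeds cleanly into the Lyapunov estimate of Theorem \ref{thm:main_result} is where I would concentrate the care.
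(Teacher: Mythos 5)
Your proposal is correct, and its core is the same computation as the paper's proof: expand $\hat{\rho}(t)=\rho_{\bar{n}}+\delta(t)$ (the paper normalizes $\delta=\varepsilon X$ with $\mathbb{E}_{w}^{\prime}[\|X\|_{\mathrm{Tr}}]<1$), use $\mathrm{Tr}[\delta]=0$ together with $J_{z}\rho_{\bar{n}}=(J-\bar{n})\rho_{\bar{n}}$, and collect the first-order terms into $\mathrm{Tr}[\delta\,(J_{z}-(J-\bar{n})I_{N})^{2}]$. Two of your deviations from the paper are worth recording. First, your sign is the right one: the exact identity is
\begin{align*}
V_{\hat{\rho}(t)}(J_{z})
=\mathrm{Tr}[\delta(t)\,(J_{z}-(J-\bar{n})I_{N})^{2}]-\mathrm{Tr}[J_{z}\delta(t)]^{2},
\end{align*}
equivalently $\varepsilon\hat{\alpha}(t)-\varepsilon^{2}\mathrm{Tr}[X(t)J_{z}]^{2}$ in the paper's normalized variables, whereas the lemma and the paper's proof carry a spurious $+$ sign (a quick check with $N=2$, $\bar{n}=0$, $\hat{\rho}=\mathrm{diag}(1-s,s)$, $\rho_{0}=\mathrm{diag}(1,0)$ gives $V_{\hat{\rho}}(J_{z})=s-s^{2}$, not $s+s^{2}$); you also correctly flag that the statement's $\hat{\alpha}$, written with $\hat{\rho}-\rho_{\bar{n}}$, is inconsistent by a factor of $\varepsilon$ with the proof's $\hat{\alpha}$, written with $X=(\hat{\rho}-\rho_{\bar{n}})/\varepsilon$. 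Neither slip matters downstream, since Theorem \ref{thm:main_result} only uses the order estimates $\mathbb{E}_{w}^{\prime}[V_{\hat{\rho}(t)}(J_{z})]=O(\varepsilon)$ and $\mathbb{E}_{w}^{\prime}[V_{\hat{\rho}(t)}(J_{z})^{2}]=O(\varepsilon^{2})$, which hold with either sign, and positivity of the variance is recovered exactly as you observe, from $\hat{\alpha}\geq\mathrm{Tr}[\delta J_{z}]^{2}$. Second, your route to $\hat{\alpha}\geq 0$ and the equality case --- rewriting $\hat{\alpha}=\mathrm{Tr}[\hat{\rho}\,(J_{z}-(J-\bar{n})I_{N})^{2}]$ because the projector annihilates the reference term, then invoking positivity of both factors and simplicity of the spectrum of $J_{z}$ --- is cleaner and more complete than the paper's entrywise argument (sign pattern of the diagonal of $X$ plus $\mathrm{Tr}[X]=0$), which leaves implicit the final step that a positive semidefinite matrix with vanishing diagonal entries must have the corresponding rows and columns vanish before one can conclude $\hat{\rho}=\rho_{\bar{n}}$.
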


\begin{proof}
If $\mathbb{E}_{w}^{\prime} [ \| \hat{\rho}(t) - \rho \| _{\mathrm{Tr}} ]  < \varepsilon $ holds for some small $\varepsilon >0$, 
there exists $X(t) = X(t) ^{\ast} \in \mathbb{C} ^{N\times N}$ that satisfies $\hat{\rho} (t) = \rho + \varepsilon X(t)$ and 
$\mathbb{E}_{w}^{\prime} [ \| X(t) \| _{\mathrm{Tr}} ]< 1$.  
This implies that if $\mathbb{E}_{w}^{\prime}[ \| \hat{\rho}(t) - \rho _{\bar{n}} \| _{\mathrm{Tr}} ] < \varepsilon$ holds for a small $\varepsilon >0$, 
then 
\begin{align*}
V_{\hat{\rho}(t)}(J_{z}) = \varepsilon \underbrace{
\mathrm{Tr}[ X(t) (J_{z} - (J -\bar{n})I_{n} )^{2}  ] }_{= \hat{\alpha} (t)}
+ \varepsilon ^{2} \mathrm{Tr}[ X(t) J_{z}] ^{2}
\end{align*}
holds.  Since $ \rho _{\bar{n}} + \varepsilon X(t) \in \mathcal{S}(\mathbb{C}^{N}) $, the $(\bar{n}+1)$-th diagonal element of $X(t)$ needs to be nonpositive and the other diagonal elements are nonnegative, and $\mathrm{Tr}[X(t)] =0$.  
The $(\bar{n}+1)$-th diagonal element of $(J_{z} - (J -\bar{n})I_{n} )^{2}$ becomes $0$, 
so $\hat{\alpha} (t)$ is nonnegative and $\hat{\alpha} (t)=0$ iff $\hat{\rho} (t) = \rho _{\bar{n}}$.  

\end{proof}

Therefore, $\mathbb{E}_{w}^{\prime}[V_{\hat{\rho}(t)} (J_{z}) ] = O(\varepsilon )$.  
Similar argument gives $\mathbb{E}_{w}^{\prime}[V_{\hat{\rho}(t)} (J_{z})^2 ] = O(\varepsilon ^2)$ and $\mathbb{E}_{w}^{\prime}[V_{\rho(t)}(J_{z}) ]=O(\varepsilon)$ from the assumptions.  
Note that $dy(t)$ in \eqref{eq:adaptive_sde} can be replaced by $\theta x(t) dt + dw(t)$.  
Let $\mathcal{L}$ be the infinitesimal generator 
 \cite{kushner1967ssa}.  
Using the classical Ito calculus, the infinitesimal generators of $V_{\rho(t)}(J_{z})$ and $V_{\hat{\rho}(t)}(J_{z})$ are as follows.  
\begin{align*}
\mathcal{L} V_{\rho(t)} (J_{z}) 
=& 
-4\theta^2 V_{\rho(t)} (J_{z}) ^2 
-\mathrm{i} u(t) \mathrm{Tr}[ J_{y} [J_{z} , \rho (t) ]_{-} ] ,
\\
\mathcal{L} V_{\hat{\rho}(t)} (J_{z}) 
\leq &
-4\theta ^2 V_{\hat{\rho}(t)} (J_{z}) ^2 
- \mathrm{i} u(t) \mathrm{Tr}[ J_{y} [J_{z} , \hat{\rho} (t) ]_{-} ] 
\\ & 
+ 2\theta ^2 V_{\hat{\rho}(t)} (J_{z}) (x(t)- \hat{x}(t) ) 
\\ &
+2 \theta \hat{x}(t) V_{\hat{\rho}(t)} (J_{z}) (\theta - \hat{\theta}(t)  ) 
\\
&
+4( \theta ^2 -\hat{\theta}(t)^2) V_{\hat{\rho}(t)} (J_{z}) ^2 
\\ & 
+2(\hat{\theta}(t) -\theta ) V_{\hat{\rho}(t)} (J_{z}) 
(\theta x(t)- \hat{\theta}(t) \hat{x}(t) ) 
.
\end{align*}
Since $\mathbb{E}_{w}^{\prime} [V_{\rho(t)} (J_{z})^2 ] = O(\varepsilon ^2)$, $\mathbb{E}_{w}^{\prime} [ [ \rho(t) , J_{z}]_{-} ]=O(\varepsilon)$, $\mathbb{E}_{w}^{\prime}[ [ \hat{\rho}(t) , J_{z}]_{-} ] =O(\varepsilon)$, 
$\mathbb{E}_{w}^{\prime} [u_{FB}(\hat{\rho}(t)) ]= O(\varepsilon ^2 )$, and $u_{FF} (t)= O(f(t)^{2})$, 
\begin{align}
& \mathbb{E}_{w}^{\prime}[ \mathcal{L} (V_{\rho(t)} (J_{z}) + V_{\hat{\rho}(t)} (J_{z}) ) ]
\nonumber
\\
= & \mathbb{E}_{w}^{\prime} \Bigg{[}
- 4\theta ^2 (V_{\rho(t)} (J_{z}) ^2 + V_{\hat{\rho}(t)} (J_{z}) ^2 ) + \sigma_{1}u_{FF}(t)
\nonumber \\
&
+ 2\theta ^2 V_{\hat{\rho}(t)} (J_{z}) \left( (x(t) - \hat{x}(t))  + \hat{x}(t) \left( 1 - \frac{\hat{\theta}(t)}{\theta} \right)  \right)
\Bigg{]}
\nonumber
\\
&
+O(\varepsilon ^3 ) 
+O(\varepsilon C_{\theta} (t) ) + O\left( \varepsilon ^2 \sqrt{C_{\theta}(t)} \right)
,
\label{eq:orderestimate1}
\end{align}
where $\sigma _{1} := \max _{\rho \in \mathcal{S}(\mathbb{C}^{N}) } |\mathrm{Tr}[J_{y} [J_{z},\rho ]_{-}]|$.

Next, we calculate the infinitesimal generator of $C_{x}(t)$ and $C_{\theta}(t)$.   
From simple calculation, 
\begin{align*}
&\mathcal{L}C_{x}(t) 
 \\
\leq & 
2 |u_{FB}(\hat{\rho}(t))| 
 \left| \mathrm{Tr}[  J_{y} [ \rho(t) - \hat{\rho}(t) , J_{z} ]_{-} ] \right|   \sqrt{C_{x}(t)}
\\ & 
+ 8  J \sigma _{1} u_{FF}(t)
+
4 \hat{\theta}(t) \theta V_{\hat{\rho}(t)}(J_{z})
 \\
 & \quad \times
 \Bigg{\{}
 -C_{x}(t)
 +
 |\hat{x}(t)| \sqrt{C_{\theta} (t)} \sqrt{C_{x}(t)} 
 \Bigg{\}}
 \\
& + 
\left( 
\theta  V_{\rho(t)}(J_{z})
 - 
 \hat{\theta} (t)  V_{\hat{\rho}(t)}(J_{z})
 \right) ^{2} 
 .
\end{align*}
From the definition of $C_{\theta}(t)$, 
\begin{align*}
 \mathcal{L} C_{\theta}(t) 
\leq 
& 2f(t) 
\Bigg{\{} -  \hat{x}(t) ^{2} C_{\theta}(t) +  |\hat{x}(t) |  \sqrt{C_{\theta} (t)} \sqrt{C_{x}(t)} 
\\
& \hspace{2cm} 
+ \frac{\hat{x}(t)^{2} f(t)}{8\theta ^2} \Bigg{\}} 
.
\end{align*}
Since the expectation of the right-hand side of the above inequality is at most $O(\varepsilon ^2)$ for small $t - t_{0}> 0$, 
$\mathbb{E}_{w}^{\prime} [ C_{\theta}(t) ] - C_{\theta}(t_{0}) = 
\int _{t_{0}}^{t} \mathbb{E}_{w}^{\prime}[ \mathcal{L} C_{\theta}(\tau ) ] d\tau 
\leq (t-t_{0}) \times O(\varepsilon ^{2}) $, where the Dynkin's formula \cite{kushner1967ssa} is used.  
Let $a(t) := 4 \hat{\theta}(t) \theta V_{\hat{\rho}(t)}(J_{z})$ and $b(t) := 2f(t)$.  
Note that $a(t)=0$ iff $V_{\hat{\rho}(t)}(J_{z})=0$.  
Then, 
\begin{align}
& \mathbb{E}_{w}^{\prime}[ \mathcal{L}(C_{x}(t) + C_{\theta}(t)) ]
\nonumber
\\
\leq &
\mathbb{E}_{w} ^{\prime}
\Bigg{[}
2 |u_{FB}(\hat{\rho}(t))| 
 \left| \mathrm{Tr}[  J_{y} [ \rho(t) - \hat{\rho}(t) , J_{z} ]_{-} ] \right|   \sqrt{C_{x}(t)}
 \nonumber \\ & \quad \quad 
+ 
8  J \sigma _{1} u_{FF}(t)
\nonumber
\\
& 
\quad \quad 
-a(t) C_{x}(t) - b(t) \hat{x}(t)^{2} C_{\theta}(t) 
\nonumber
\\
& 
\quad \quad 
+ (a(t) + b(t) )|\hat{x}(t)| \sqrt{C_{x}(t) C_{\theta}(t)}
\nonumber
 \\
& 
\quad \quad 
+ 
\left( 
\theta  V_{\rho(t)}(J_{z})
 - 
 \hat{\theta} (t)  V_{\hat{\rho}(t)}(J_{z})
 \right) ^{2}
+ \frac{J^{2} b(t)^2}{16\theta ^2} 
\Bigg{]}
\nonumber
\\
= & 
\mathbb{E}_{w} ^{\prime} \Bigg{[}
\underbrace{
\theta ^{2}\left( 
 V_{\rho(t)}(J_{z}) -  V_{\hat{\rho}(t)}(J_{z})
 \right) ^{2}
  }_{O(\varepsilon ^2)}
  \nonumber \\
  & \quad \quad 
 +
 \underbrace{
 \frac{J^{2} b(t)^2}{16\theta ^2}
+ 8  J \sigma _{1} u_{FF}(t)
}_{O(f(t)^2)}
\nonumber
\\
& \quad \quad 
-
 \big{(} 
 \underbrace{ a(t) C_{x}(t)   }_{=O(\varepsilon ^3)}
+ 
\underbrace{b(t) | J-\bar{n} |^{2}C_{\theta}(t) }_{=O(f(t) C_{\theta}(t))}\big{)} 
+O(\varepsilon ^4) 
\Bigg{]}
.  
\label{eq:orderestimate2}
\end{align}
Note that 
\[ \mathbb{E}_{w}^{\prime} [ |u_{FB}(\hat{\rho}(t))| 
 \left| \mathrm{Tr}[  J_{y} [ \rho(t) - \hat{\rho}(t) , J_{z} ]_{-} ] \right|   \sqrt{C_{x}(t)} ] = O(\varepsilon ^4).\]

From \eqref{eq:orderestimate1} and \eqref{eq:orderestimate2},  
\begin{align*}
&\mathbb{E} _{w}^{\prime}\left[ \mathcal{L}G(\rho(t) , \hat{\rho}(t) , \hat{\theta}(t)) \right]
\\
\leq &
\mathbb{E} _{w}^{\prime} \Bigg{[}
\theta ^{2}\left( 
 V_{\rho(t)}(J_{z}) -  V_{\hat{\rho}(t)}(J_{z})
 \right) ^{2}
 +
 \frac{J^{2} b(t)^2}{16\theta ^2} 
 \\
 & \quad \quad 
   + (1+8J) \sigma_{1}u_{FF}(t)
   \\
   & \quad \quad 
 - 4 \theta ^{2} \left( V_{\rho (t)} (J_{z}) ^2 + V_{\hat{\rho} (t)} (J_{z}) ^2 \right) 
\\
& \quad \quad 
+ 2\theta ^2 V_{\hat{\rho}(t)} (J_{z}) 
\\ & \quad \quad \quad 
\times \left( (x(t) - \hat{x}(t))  + \hat{x}(t) \left( 1 - \frac{\hat{\theta}(t)}{\theta} \right)  \right)
\\
& \quad \quad 
-b(t) | J-\bar{n} |^{2}C_{\theta}(t) +O(\varepsilon ^3) \Bigg{]}
\\
=&
- \mathbb{E} _{w} ^{\prime} \left[ \Delta (\rho (t), \hat{\rho}(t), \hat{\theta}(t) ) 
+
b(t) | J-\bar{n} |^{2}C_{\theta}(t) \right]
\\
&
+
\gamma (t) +O(\varepsilon ^3)
,  
\end{align*}
where $\gamma (t):= \frac{J^{2} b(t)^2}{16\theta ^2} + (1+8J) \sigma_{1}u_{FF}(t) $.  
As $p \in (0.5 ,1]$, $b(t)^{2} = 4f(t)^2$ and $u_{FF}(t)$ are integrable, i.e., $\gamma (t)$ is integrable.   

Together with the assumption \eqref{eq:assumption_main_theorem}, $\mathbb{E}_{w}^{\prime}[C_{\theta}(t)] \leq O(\varepsilon ^2)$ for all $t\geq t_{0}$ and using Dynkin's formula \cite{kushner1967ssa}, 
\begin{align*}
& 
\mathbb{E}_{w}^{\prime} \left[
G(\rho(\infty) , \hat{\rho}(\infty) , \hat{\theta}(\infty)) 
\right]
- 
G(\rho(t_{0}) , \hat{\rho}(t_{0}) , \hat{\theta}(t_{0}))
\\
&+ 
\int _{t_{0}}^{\infty}
\mathbb{E}_{w} ^{\prime}[ \Delta (\rho (\tau), \hat{\rho}(\tau), \hat{\theta}(\tau) ) ] d\tau 
\\
&
+ |J-\bar{n}|^2 \int _{t_{0}}^{\infty} b(\tau) \mathbb{E}_{w}^{\prime}[C_{\theta}(\tau) ] d\tau 
+ \int _{t_{0}}^{\infty} \mathbb{E}_{w}^{\prime}[ O(\varepsilon ^3) ] d\tau 
\\
\leq &\int _{t_{0}}^{\infty} \gamma (\tau ) d\tau < \infty
\end{align*}
holds.  
Note that the integrand of the last term of the left-hand side of the first inequality $\mathbb{E}_{w}^{\prime}[O(\varepsilon ^3)]$ converges to zero faster than the other terms.  
The other terms of the left-hand side are positive and need to be finite.  
Hence, $\lim _{t \to \infty}\Delta (\rho (t), \hat{\rho}(t), \hat{\theta}(t) ) =0$ a.s.  
Since $x(t)$ or $\hat{x}(t)$ fluctuates randomly if $\rho (t)\neq 0$ or $\hat{\rho}(t) \neq 0$, 
$\lim _{t\to \infty}\Delta (\rho (t) , \hat{\rho}(t),\hat{\theta}(t))=0$ implies that $V_{\rho (t)} (J_{z}) $ and $V_{\hat{\rho}(t)}(J_{z})$ converge to the origin.  
From the assumption that $\rho (t) $ and $\hat{\rho}(t)$ stay in the neighborhood of $\rho _{\bar{n}}$, 
the convergence of $V_{\rho (t)} (J_{z}) $ and $V_{\hat{\rho}(t)}(J_{z})$ implies $( \rho (t), \hat{\rho}(t))$ converges to $(\rho _{\bar{n}} , \rho _{\bar{n}})$.   
Furthermore, since $b(t)$ is not integrable and, although we skip the proof, $\hat{\theta}(t)$ is continuous in $t$, $\lim_{t \to\infty} C_{\theta}(t) =0 $ a.s.  
Therefore, $\lim _{t \to \infty} \hat{\theta}(t) = \theta$ a.s.

\end{document}